\newtheorem{theorem}{Theorem}[section]
\newtheorem{corollary}[theorem]{Corollary}
\newtheorem{lemma}[theorem]{Lemma}
\newtheorem{proposition}[theorem]{Proposition}
\theoremstyle{definition}
\newtheorem{remark}[theorem]{Remark}
\numberwithin{equation}{section}
\begin{document}


\baselineskip=17pt


\title{Multipliers over Fourier algebras of ultraspherical hypergroups}

\author{ 
Reza Esmailvandi\\
Department of Mathematical Sciences\\
Isfahan University of Technology\\
	Isfahan 84156-83111, Iran\\
E-mail: r.esmailvandi@math.iut.ac.ir
\and
Mehdi Nemati\\
Department of Mathematical Sciences\\
Isfahan University of Technology\\
Isfahan 84156-83111, Iran\\
and\\
School of Mathematics\\
  Institute for Research in Fundamental
  Sciences (IPM)\\
  P. O. Box: 19395-5746, Tehran, Iran\\
E-mail: m.nemati@cc.iut.ac.ir
}

\date{}

\maketitle


\renewcommand{\thefootnote}{}

\footnote{2010 \emph{Mathematics Subject Classification}: Primary: 43A62, 43A30; Secondary: 43A22, 46J10.}

\footnote{\emph{Key words and phrases}: Fourier algebras, ultrapherical hypergroups, locally compact groups,  multiplier algebras.}

\renewcommand{\thefootnote}{\arabic{footnote}}
\setcounter{footnote}{0}


\begin{abstract}
Let $H$ be an ultraspherical hypergroup associated to a locally compact
group $ G $ and let $A(H)$  be the Fourier  algebra  of $H$.  For a left Banach $A(H)$-submodule $X$  of $VN(H)$, define $Q_X$ to be the norm closure of the linear span of the set $\{uf: u\in A(H), f\in X\}$  in $B_{A(H)}(A(H), X^*)^*$. We will show that  $B_{A(H)}(A(H), X^*)$ is a dual Banach space with predual $Q_X$, we characterize  $Q_X$ in terms of elements in  $A(H)$ and $ X$. Applications obtained on the multiplier algebra  $ M(A(H))$ of the Fourier algebra $ A(H)$. In particular,  we prove that $ G $ is amenable if and only if   $ M(A(H))= B_{\lambda}(H)$,
where $B_{\lambda}(H) $ is the reduced Fourier-Stieltjes algebra of $ H $. Finally, we investigate some characterizations for  an ultraspherical hypergroup  to be discrete.
\end{abstract}

\section{Introduction}
Let $G$ be a locally compact group and let $A(G)$ and $B(G)$ be the Fourier  and Fourier-Stieltjes algebras  of $G$ introduced by Eymard \cite{eym}. Let $M(A(G))$ denote the multiplier algebra of  $A(G)$.  Then we have the following inclusions
  $$
  A(G)\subseteq B(G)\subseteq M(A(G))
  $$
and  $\|v\|_{A(G)}=\|v\|_{B(G)}\geq \|v\|_M$ for all  $v \in A(G)$.
 It is known that if $G$ is amenable, then
 $B(G)= M(A(G))$ isometrically. Moreover,  it is known from Losert \cite{los1}  that $G$ is amenable,
 or equivalently $A(G)$ has a bounded approximate identity, whenever 
 the norms $\lVert\cdot\rVert_{B(G)}$ and  $\lVert\cdot\rVert_{M}$ are equivalent on $A(G)$.
As in the group case, the Fourier space $A(H)$ of a locally compact hypergroup $H$, plays an important role in the harmonic analysis. A class of hypergroups, called tensor hypergroups, whose Fourier space forms
 a Banach algebra under pointwise multiplication first
 appeared in \cite{am}. Another class, called ultraspherical hypergroups, was studied by
 Muruganandam \cite{murg2}. In this work, we study
 ultraspherical hypergroups through multipliers of $A(H)$, denoted $M(A(H))$.
 
Let ${\mathcal A}$ be a  Banach algebra, and let $X$ and
$Y$ be two right  Banach   ${\mathcal A}$-modules. 
Suppose that $B_{\mathcal A}(X, Y)$ is the Banach space of  bounded right ${\mathcal A}$-module
maps  with the operator norm denoted by $ \lVert \cdot\rVert_M $. In recent years, people have become interested in studying the properties of $B_{\mathcal A}(X, Y)$  for various classes of Banach algebras ${\mathcal A}$ and right  Banach   ${\mathcal A}$-modules $X$ and
$Y$; see for example \cite{hnr, hnr2, mi, miao}. 

In this paper, for a left Banach ${\mathcal A}$-submodule $X$ of ${\mathcal A}^*$ we study $B_{\mathcal A}({\mathcal A}, X^*)$ as a dual Banach space, paying  special attention to the Fourier algebra $A(H)$ of an ultraspherical hypergroup $H$ associated to a locally compact
group $ G $. 

In Section 2,  for a left Banach ${\mathcal A}$-submodule $X$ of ${\mathcal A}^*$, we show that  $B_{\mathcal A}({\mathcal A}, X^*)$ is a dual Banach space with predual $ Q_X $, where  $Q_X$ denote the norm closure of the linear span of the set  $\{af: a\in{\mathcal A}, f\in X\}$  in $B_{\mathcal A}({\mathcal A}, X^*)^*$. We will obtain   a characterization of $ Q_X$. 

In Section 3, we apply these results to Fourier algebra $A(H)$ of an ultraspherical hypergroup $H$. For the case of $X=C_\lambda^*(H)$, we show that the predual $Q_{C_\lambda^*(H)}$ of $M(A(H))$, the multiplier algebra of $A(H)$,
is equal to the closure of $L^1(H)$ in $M(A(H))$ under the multiplier norm. We also prove that $ G $ is amenable if and only if   $ M(A(H))= B_{\lambda}(H)$, where $B_{\lambda}(H) $ is the reduced Fourier-Stieltjes algebra of $H$.
In the case where $ A(H) $  is $w^*$-dense in  
$ M(A(H))$, we prove that $G$ is amenable if and only if  the norms $ \lVert \cdot\rVert_{A(H)} $and $ \lVert \cdot\rVert_{M} $ are equivalent on $ A(H)$.
 For the case of $X=C_\delta(H)$, we study the predual of   $B_{A(H)}({A(H)}, C_\delta(H)^*)$. These results generalize some results of \cite{miao} to ultraspherical hypergroups.

In Section 4, we shall define and study $ UCB(\widehat{H})$, called uniformly continuous functionals on $A(H)$. We will  focus in the relationship between $UCB(\widehat{H}) $ and other subspaces of $ VN(H) $.
We extend  various results of  \cite{lau1} to the context  of ultraspherical hypergroups.
For example, we prove that $ H $ is discrete if and only if $ UCB(\widehat{H}) =  C^{*}_{\lambda}(H)$. 
\section{The dual Banach space  $B_{\mathcal A}({\mathcal A}, X^*)$}

Let ${\mathcal A}$ be a  Banach algebra, and let $X$ and
$Y$ be right and left Banach  ${\mathcal A}$-modules, respectively. 
The {\it ${\mathcal A}$-module tensor product} of $X $ and $Y$ is the quotient
space $X\widehat{\otimes}_{\mathcal A} Y=(X\widehat{\otimes} Y )/ N$, where
$$
N=\langle x\cdot a\otimes y-x\otimes a\cdot y: x\in X, y\in Y, a\in {\mathcal A} \rangle,
$$
and $\langle \cdot\rangle$ denotes the closed linear span. It was shown in \cite{rifel} that
$$
B_{\mathcal A}(X, Y^*)\cong N^\perp\cong(X\widehat{\otimes}_{\mathcal A} Y)^*.
$$

Let $X$ be a left Banach ${\mathcal A}$-submodule of ${\mathcal A}^*$. In this section we show that $B_{\mathcal A}({\mathcal A}, X^*)$ is a dual Banach space and characterize its predual in terms of elements
in ${\mathcal A}$ and $X$. 
For every $a\in{\mathcal A}$ and $f\in X$, define the bounded linear functional $af$ on $B_{\mathcal A}({\mathcal A}, X^*)$ as follows:
$$
\langle af, T\rangle=\langle f, T(a)\rangle\quad (T\in B_{\mathcal A}({\mathcal A}, X^*)).
$$
Moreover, it is easy to see that $\|af\|_M\leq \|a\| \|f\|_X$. Now, we denote the  linear span of the set $\{af: a\in{\mathcal A}, f\in X\}$  by ${\mathcal A}X$ and define  $Q_X$ to be the norm closure of ${\mathcal A}X$ in $B_{\mathcal A}({\mathcal A}, X^*)^*$.

\begin{theorem}\label{2.2}
	Let ${\mathcal A}$ be a Banach algebra and let $X$ be a left Banach ${\mathcal A}$-submodule of ${\mathcal A}^*$. Then $B_{\mathcal A}({\mathcal A},  X^*)=(Q_X)^*$.
\end{theorem}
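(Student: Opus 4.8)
The plan is to realize $Q_X$ concretely as the module tensor product $\mathcal{A}\widehat{\otimes}_{\mathcal{A}}X$ and then read off the duality from the Rieffel identification quoted above. First I would fix the module conventions: view $\mathcal{A}$ as a right Banach $\mathcal{A}$-module under right multiplication, $X\subseteq\mathcal{A}^*$ as a left Banach $\mathcal{A}$-module, and $X^*$ as the resulting right $\mathcal{A}$-module, so that $B_{\mathcal{A}}(\mathcal{A},X^*)$ consists of bounded right-module maps with $T(ba)=T(b)\cdot a$. Applying the cited isomorphism from \cite{rifel} with the right module $\mathcal{A}$ and the left module $X$ gives
$$
B_{\mathcal{A}}(\mathcal{A},X^*)\cong(\mathcal{A}\widehat{\otimes}_{\mathcal{A}}X)^*,
$$
where this sends $T$ to the functional $\Phi_T$ determined on elementary tensors by $\langle\Phi_T,a\otimes f\rangle=\langle f,T(a)\rangle$. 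The one thing to verify here is that $\Phi_T$ annihilates the defining subspace $N=\langle ba\otimes f-b\otimes a\cdot f\rangle$; this follows from the right-module property $T(ba)=T(b)\cdot a$ together with the definition of the dual right action, since $\langle T(b)\cdot a,f\rangle=\langle T(b),a\cdot f\rangle$.

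Next I would compare $\mathcal{A}\widehat{\otimes}_{\mathcal{A}}X$ with $Q_X$ via the canonical isometric embedding
$$
\iota\colon \mathcal{A}\widehat{\otimes}_{\mathcal{A}}X\longrightarrow(\mathcal{A}\widehat{\otimes}_{\mathcal{A}}X)^{**}\cong B_{\mathcal{A}}(\mathcal{A},X^*)^*.
$$
Tracing the identifications, $\iota$ sends the class of $a\otimes f$ to the functional $T\mapsto\langle f,T(a)\rangle$, which is exactly the functional $af$ defined in the paper. Consequently $\iota$ carries the dense linear span of $\{a\otimes f+N\}$ onto $\mathcal{A}X$. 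Because $\iota$ is isometric, its image is complete and hence closed in $B_{\mathcal{A}}(\mathcal{A},X^*)^*$; being the image of the entire space, it must equal the closure of $\iota(\mathrm{span}\{a\otimes f+N\})=\mathcal{A}X$, namely $Q_X$. This identifies $\iota$ as an isometric isomorphism of $\mathcal{A}\widehat{\otimes}_{\mathcal{A}}X$ onto $Q_X$.

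Finally, dualizing this isomorphism and composing with the Rieffel identification yields
$$
(Q_X)^*\cong(\mathcal{A}\widehat{\otimes}_{\mathcal{A}}X)^*\cong B_{\mathcal{A}}(\mathcal{A},X^*),
$$
and a last check confirms that the resulting duality is precisely the pairing $\langle af,T\rangle=\langle f,T(a)\rangle$, establishing $B_{\mathcal{A}}(\mathcal{A},X^*)=(Q_X)^*$.

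The genuinely substantive points are the vanishing of $\Phi_T$ on $N$ (where the right-module structure is essential) and the computation $\iota(a\otimes f+N)=af$, which simultaneously shows that the canonical embedding lands in $Q_X$ and surjects onto a dense subspace of it; the remaining steps are bookkeeping with the Rieffel isomorphism and the fact that an isometric image of a Banach space is closed. I expect the main care to be in keeping the left/right conventions consistent, since $X$ is a \emph{left} submodule of $\mathcal{A}^*$ while $\mathcal{A}$ acts on the right of itself and on the right of $X^*$.
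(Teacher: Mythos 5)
Your argument is correct, but it takes a genuinely different route from the paper's proof, and the difference is worth recording. The paper does not invoke Rieffel's module-tensor-product duality inside this proof: it defines $J\colon\mathcal{A}\widehat{\otimes}X\to Q_X$ by $J\bigl(\sum_{i}a_i\otimes f_i\bigr)=\sum_{i}a_if_i$ on the \emph{full} projective tensor product, uses only the elementary identification $B(\mathcal{A},X^*)=(\mathcal{A}\widehat{\otimes}X)^*$, and then analyzes the adjoint $J^*\colon(Q_X)^*\to B(\mathcal{A},X^*)$ directly: a computation with $\langle J^*(T)(ab),f\rangle$ shows that $J^*(T)$ is a right $\mathcal{A}$-module map for every $T\in(Q_X)^*$; surjectivity holds because each $T\in B_{\mathcal{A}}(\mathcal{A},X^*)$ restricts to a functional on $Q_X$ that $J^*$ sends back to $T$; and injectivity follows from the density of the range of $J$ in $Q_X$. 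In effect, the paper re-proves by hand the module-map content that you outsource to the citation of \cite{rifel}, and in exchange it exhibits the inverse of the duality concretely as the restriction map $T\mapsto T|_{Q_X}$. Your route instead realizes $Q_X$ as the canonical isometric image of $\mathcal{A}\widehat{\otimes}_{\mathcal{A}}X$ in its bidual $B_{\mathcal{A}}(\mathcal{A},X^*)^*$ (an isometric image of a Banach space is closed, and this image contains $\mathcal{A}X$ densely), and then dualizes. What this buys: the isometric isomorphism $\mathcal{A}\widehat{\otimes}_{\mathcal{A}}X\cong Q_X$ that you establish along the way is precisely the substance of the paper's Theorem \ref{2.3} (the representations $f=\sum_{i}a_if_i$ and the infimum formula for $\|f\|_M$ fall out of the quotient norm), so your argument delivers Theorem \ref{2.2} and most of Theorem \ref{2.3} in one stroke, and it is conceptually cleaner. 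The price is that your closedness argument and the final equality of norms both lean on the Rieffel identification being \emph{isometric}, not merely a topological isomorphism; that is indeed what \cite{rifel} provides and what the paper quotes, but you should keep that requirement explicit in your write-up.
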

\begin{proof}
	Let
	$ J:\mathcal A \widehat{\otimes}X\rightarrow Q_{X} $ be defined
	by $J( \sum\limits_{i=1}^{\infty}a_i\otimes f_i)=\sum\limits_{i=1}^{\infty}a_i f_i$.
	Then it is clear that $ J $ is well defined and $\|J\|\leq 1$.
	As $ B(\mathcal A, X^{*})=(\mathcal A\widehat{\otimes} X)^{*}$, we have the adjoint operator
	$ J^*:(Q_X)^*\rightarrow B({\mathcal A}, X^{*}) $
	with 
	$ \lVert J^*\rVert\leq1$.
	Now, for each 
	$ T \in (Q_X)^* $,  we show that 
	$ J^*(T) \in  B_{\mathcal{A}}(\mathcal A, X^{*}).$
	Let $ a,b\in \mathcal A $ and $ f \in X $ . Then
	\begin{align*}
	\langle J^*(T)(ab), f\rangle
	&=\langle J^*(T),(ab)\otimes f\rangle
	=\langle T,(ab)f\rangle\\
	&=\langle T,a(bf)\rangle
	=\langle T,J(a\otimes(bf))\rangle\\
	&=\langle  J^*(T),a\otimes(bf)\rangle
	=\langle  J^*(T)(a),bf\rangle\\
	&=\langle  J^*(T)(a)\cdot b,f\rangle.
	\end{align*}
	Therefore,
	$ J^*(T)(ab)=  J^*(T)(a)\cdot b $
	for all $ a,b \in \mathcal{A}. $
	Thus,
	$ J^*(T) \in  B_{\mathcal{A}}(\mathcal A, X^{*}).$
	Let 
	$ T  \in  B_{\mathcal{A}}(\mathcal A, X^{*})$. Then the restriction   of $ T $
	to $ Q_X $ is in $ (Q_X)^* $ and we have
	\begin{eqnarray*}
	\langle J^*(T), \sum\limits_{i=1}^{\infty}a_i \otimes f_i  \rangle 
	&=&\langle T,  \sum\limits_{i=1}^{\infty}a_i f_i  \rangle 
	=\sum\limits_{i=1}^{\infty}\langle T,  a_i f_i  \rangle\\
	&=&\sum\limits_{i=1}^{\infty}\langle T(a_i) , f_i  \rangle
	=\langle T, \sum\limits_{i=1}^{\infty}a_i \otimes f_i  \rangle,
	\end{eqnarray*}
	for all $\sum\limits_{i=1}^{\infty}a_i \otimes f_i \in \mathcal A\widehat{\otimes} X$.
	It follows that $ J^*(T) = T $ and
	$ J^* $ is surjective.
	Since $ J(\mathcal A \widehat{\otimes}X) $
	is dense in $ Q_X, $
	by \cite[ Theorem 3.1.17]{meg}
	$ J^* $ is injective. Therefore, 
	$ J^* $ is a surjective isometry.
\end{proof}

\begin{theorem}\label{2.3}
	Let ${\mathcal A}$ be a Banach algebra and let $ X$ be a left Banach ${\mathcal A}$-submodule of ${\mathcal A}^*$. Suppose that  $f\in B_{\mathcal A}({\mathcal A}, X^*)$. Then $f\in Q_X$ if and only if there are sequences $(a_i)\subseteq {\mathcal A}$ and $(f_i)\subseteq X$ with $\sum_{i=1}^\infty \|a_i\|\|f_i\|<\infty$ such that $f=\sum_{i=1}^\infty a_if_i$ and 
	$$
	\|f\|_M=\inf\left\{\sum_{i=1}^\infty \|a_i\|\|f_i\|:  f=\sum_{i=1}^\infty a_if_i, \  \sum_{i=1}^\infty \|a_i\|\|f_i\|<\infty\right\}.
	$$
	
\end{theorem}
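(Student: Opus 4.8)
The strategy is to read off both the existence of the representation and the norm formula from the fact --- implicit in the proof of Theorem~\ref{2.2} --- that the map $J\colon\mathcal A\,\widehat\otimes\,X\to Q_X$, $J(\sum_i a_i\otimes f_i)=\sum_i a_if_i$, is a \emph{metric surjection} (quotient map) onto $Q_X$. The easy implication comes first: if $f=\sum_{i=1}^\infty a_if_i$ with $\sum_{i=1}^\infty\|a_i\|\,\|f_i\|<\infty$, then, since $\|a_if_i\|_M\le\|a_i\|\,\|f_i\|$, the partial sums $\sum_{i=1}^n a_if_i$ lie in $\mathcal AX$ and converge in $\|\cdot\|_M$ to $f$, so $f\in\overline{\mathcal AX}=Q_X$; moreover $\|f\|_M\le\sum_{i=1}^\infty\|a_i\|\,\|f_i\|$, and taking the infimum over all such representations already yields one half of the identity, $\|f\|_M\le\inf\{\cdots\}$.

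For the converse I would exploit Theorem~\ref{2.2}. Its proof shows that $J^*$ is an isometry of $(Q_X)^*=B_{\mathcal A}(\mathcal A,X^*)$ into $B(\mathcal A,X^*)=(\mathcal A\,\widehat\otimes\,X)^*$. A bounded operator whose adjoint is an isometric embedding is automatically a metric surjection (a standard duality consequence of the Hahn--Banach and open mapping theorems; cf.\ \cite{meg}). Hence $J$ is surjective onto $Q_X$ and, for every $f\in Q_X$,
$$
\|f\|_M=\inf\bigl\{\|u\|_\pi: u\in\mathcal A\,\widehat\otimes\,X,\ J(u)=f\bigr\},
$$
where $\|\cdot\|_\pi$ denotes the projective tensor norm. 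Given $\varepsilon>0$, surjectivity supplies $u$ with $J(u)=f$ and $\|u\|_\pi\le\|f\|_M+\varepsilon$; choosing an absolutely summable representation $u=\sum_{i=1}^\infty a_i\otimes f_i$ with $\sum_{i=1}^\infty\|a_i\|\,\|f_i\|\le\|u\|_\pi+\varepsilon$ and applying $J$ produces $f=\sum_{i=1}^\infty a_if_i$ with $\sum_{i=1}^\infty\|a_i\|\,\|f_i\|\le\|f\|_M+2\varepsilon$. This simultaneously furnishes the required representation and gives $\inf\{\cdots\}\le\|f\|_M$, which together with the first paragraph completes the equality.

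The step I expect to be the crux is the passage from the isometry of $J^*$ provided by Theorem~\ref{2.2} to the metric-surjection property of $J$ itself --- equivalently, the identification of $Q_X$ with the quotient $\mathcal A\,\widehat\otimes\,X/\ker J$ as Banach spaces, and not merely as vector spaces. Once that identification is secured, the remainder is a routine unwinding of the definition of $\|\cdot\|_\pi$, the only care being to select the absolutely summable representations with the stated control on $\sum\|a_i\|\,\|f_i\|$ and to check that the resulting series converges to $f$ in the norm of $B_{\mathcal A}(\mathcal A,X^*)^*$.
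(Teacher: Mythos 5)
Your proposal is correct and follows essentially the same route as the paper: both exploit Theorem \ref{2.2} to realize $Q_X$ as an isometric quotient of a projective tensor product and then lift absolutely summable representations with arbitrarily small loss of norm. The only cosmetic difference is that the paper factors through the module tensor product ${\mathcal A}\widehat{\otimes}_{\mathcal A} X$, getting surjectivity of the induced map from \cite[Theorem 3.1.22]{meg} and the norm formula from the definition of the quotient norm, whereas you work with ${\mathcal A}\widehat{\otimes} X$ directly and package the same duality argument as the standard ``contractive map with isometric adjoint is a metric surjection'' lemma.
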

\begin{proof}
	By definition, each element of the form 
	$ \sum_{i=1}^{\infty} a_i f_i $, as  in the proof of Theorem \ref{2.2}, lies in 
	$ Q_X .$
	
	For the converse,  let $ I:{\mathcal A}\widehat{\otimes}_{\mathcal A} X\rightarrow Q_X $
	be defined by
	$$I(\sum_{i=1}^{\infty}a_i \otimes f_i+N) = \sum_{i=1}^{\infty}a_i  f_i.$$
	Then it is routine to check that $ I $ is well defined and $ \lVert I\rVert\leq1$. In fact, if $ \sum_{i=1}^{\infty}a_i \otimes f_i \in N $, then for each $ T \in B_{\mathcal A}({\mathcal A},  X^*)$, we have
	$$  \langle \sum_{i=1}^{\infty}a_i  f_i, T \rangle=\sum_{i=1}^{\infty}\langle T(a_i) ,  f_i \rangle= \langle\sum_{i=1}^{\infty} a_i \otimes f_i, T\rangle=0.$$
	Hence, $ I $ is well defined by duality.
	
	We know  from Theorem \ref{2.2} that $({\mathcal A}\widehat{\otimes}_{\mathcal A} X)^*= B_{\mathcal A}({\mathcal A},  X^*)=(Q_X)^* $. 
	It follows that
	$ I^*:(Q_X)^*\rightarrow ({\mathcal A}\widehat{\otimes}_{\mathcal A} X)^*$
	is bijective.
	Hence, $ I $ is surjective by
	\cite[ Theorem 3.1.22]{meg}.
	This proves first part of  the theorem.
	

	For the second part, let $ f \in Q_X $ and 
	$ \epsilon>0 $ be given. Then by first part of theorem, 
	there are sequences  $(a_i)\subseteq {\mathcal A}$ and $(f_i)\subseteq X$ such that $f=\sum_{i=1}^\infty a_if_i$ with $\sum_{i=1}^\infty \|a_i\|\|f_i\|<\infty$.
	Let $ \xi = \sum_{i=1}^\infty a_i \otimes f_i +N$. Then $  \langle T, f \rangle= \langle T, \xi \rangle $ for all $ T\in B_{\mathcal A}({\mathcal A},  X^*)$,  
	which implies that $ \lVert f\rVert_M=\lVert \xi\rVert. $
	Now, as a consequence of the definition of quotient norm, there exist sequences 
	$(b_i)\subseteq {\mathcal A}$ and $(h_i)\subseteq X$ such that $\sum_{i=1}^\infty \|b_i\|\|h_i\|<\lVert f\rVert_M +\epsilon$ and  $\xi=\sum_{i=1}^\infty b_i \otimes h_i +N.$
	Hence,
	$f=\sum_{i=1}^\infty b_i  h_i $
	on $ B_{\mathcal A}({\mathcal A},  X^*), $
	as required. This completes the proof.
\end{proof}

Suppose   that $X$ is a left Banach ${\mathcal A}$-module. Then $X^*$ is a right Banach ${\mathcal A}$-module with the following module action
$$
\langle m\cdot a ,f\rangle=\langle m, a\cdot f\rangle\quad(m\in X^*, f\in X, a\in {\mathcal A}).
$$
By the above notions it is not hard to see that, if $X$ is a left Banach ${\mathcal A}$-submodule of ${\mathcal A}^*$, then the map $$\iota: X^*\rightarrow B_{\mathcal A}({\mathcal A}, X^*),\quad m\mapsto m_L$$ is a
contractive linear map, where 
$m_L$ is given  by $ m_L(a)=m\cdot a$ for all $a\in{\mathcal A}$ and $\|m_L\|_M\leq \|m\|_X$. 
Thus, we can assume that $X^*\subseteq B_{\mathcal A}({\mathcal A},  X^*)$. Moreover, the adjoint map $\iota^*: B_{\mathcal A}({\mathcal A},  X^*)^*\rightarrow X^{**}$ is simply the restriction map, say $R$ and for every $a\in {\mathcal A}$, $f\in X$ and $m\in X^*$ we have
$$
\langle R(af), m\rangle=\langle af, m_L\rangle=\langle f, m_L(a)\rangle=\langle f, m\cdot a\rangle=\langle a\cdot f, m\rangle,
$$
which implies that $R(Q_X)\subseteq X$.

\begin{proposition}\label{p3}
	Let ${\mathcal A}$ be a Banach algebra and let $X$ be a left Banach ${\mathcal A}$-submodule of ${\mathcal A}^*$. Then  $R: Q_X\rightarrow X$ is surjective if and only if the norms $\|\cdot\|_X$ and $\|\cdot \|_M$ are equivalent on $X^*$.
	
\end{proposition}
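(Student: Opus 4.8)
The plan is to realize the surjectivity of $R$ on $Q_X$ as a statement about the adjoint of a single operator, and then to apply the standard duality between surjectivity and boundedness below. Write $S:=R|_{Q_X}\colon Q_X\to X$; this is a well-defined contraction, since the computation preceding the proposition shows $R(Q_X)\subseteq X$ and $R=\iota^*$ satisfies $\|R\|\le 1$. The key observation I would establish is that $S$ is a pre-adjoint of $\iota$, that is, $\iota=S^*$ after the identification $B_{\mathcal A}({\mathcal A},X^*)=(Q_X)^*$ furnished by Theorem \ref{2.2}.

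To verify $\iota=S^*$, I would test both sides against a generator $q=af\in{\mathcal A}X$ of $Q_X$. For $m\in X^*$ one has
$$
\langle S^*(m),af\rangle=\langle m,S(af)\rangle=\langle m,a\cdot f\rangle=\langle f,m\cdot a\rangle=\langle af,m_L\rangle=\langle\iota(m),af\rangle,
$$
which is exactly the displayed identity $\langle R(af),m\rangle=\langle a\cdot f,m\rangle$ recorded just before the statement. Since ${\mathcal A}X$ is dense in $Q_X$ and both $S^*(m)$ and $\iota(m)$ are continuous, it follows that $S^*(m)=\iota(m)$ for every $m\in X^*$, i.e. $S^*=\iota$.

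With this identification in hand, I would invoke the surjectivity--bounded below duality (the same tool, \cite[Theorem 3.1.22]{meg}, used in the proof of Theorem \ref{2.3}): the bounded operator $S$ is surjective if and only if its adjoint $S^*=\iota$ is bounded below, i.e. there is $c>0$ with $\|\iota(m)\|_M\ge c\|m\|_X$ for all $m\in X^*$. Recalling that $\|m\|_M:=\|m_L\|_M=\|\iota(m)\|_M$ always satisfies $\|m\|_M\le\|m\|_X$ by the contractivity of $\iota$, the estimate $\|m\|_M\ge c\|m\|_X$ is precisely the statement that $\|\cdot\|_X$ and $\|\cdot\|_M$ are equivalent on $X^*$. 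This yields both implications at once.

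The only genuinely non-routine step is the identification $\iota=S^*$; once the pre-adjoint relationship is in place, the equivalence is a direct application of the closed--range duality. A minor care-point is that $\|\cdot\|_M$ is, a priori, only a seminorm on $X^*$ (as $\iota$ need not be injective in general), but boundedness below forces $\iota$ to be injective, so whenever either side of the equivalence holds the two quantities genuinely compare as norms on $X^*$.
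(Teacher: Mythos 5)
Your proof is correct and takes essentially the same route as the paper: both arguments reduce the surjectivity of $R$ on $Q_X$ to a property of its adjoint, which (under the identification $(Q_X)^*=B_{\mathcal A}({\mathcal A},X^*)$ from Theorem \ref{2.2}) is exactly the map $\iota\colon m\mapsto m_L$, and then invoke the standard surjectivity--adjoint duality from \cite{meg}. The only differences are presentational: you make the identification $S^*=\iota$ explicit (the paper leaves it implicit in writing $R^*\colon X^*\to (Q_X)^*$) and you use the single ``onto iff adjoint bounded below'' formulation, whereas the paper splits this into ``injective with closed range'' plus the Open Mapping theorem in one direction and Theorems 3.1.17/3.1.21 of \cite{meg} in the other.
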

\begin{proof}
	Let $ R $ be surjective.
	Then $ R^*: X^*\rightarrow (Q_X)^*$ is injective and $R^*(X^*)$
	is closed in $ (Q_X)^* $ by
	\cite[ Theorem 3.1.22]{meg}. Since $\lVert \cdot\rVert_M\leq \lVert \cdot\rVert_X$ on $X^*$, the Open Mapping theorem shows that the norms  $ \lVert \cdot\rVert_M $ and $ \lVert \cdot\rVert_X $ are equivalent on $ X^* $.
	
	Conversely, let the norms $ \lVert \cdot\rVert_M $ and $ \lVert \cdot\rVert_X $ are equivalent on $ X^*$. 
	Then $R^* $ is injective and $R^*(X^*) $ is closed in $ (Q_X)^* $.
	It follows from  \cite[ Theorem 3.1.17]{meg} and \cite[ Theorem 3.1.21]{meg} that 
	$R$ is surjective.
	
\end{proof}

For every $a\in {\mathcal A}$ we can regard $a$ as a functional on $X$. It follows that the  map $$\iota: {\mathcal A}\rightarrow B_{\mathcal A}({\mathcal A}, X^*),\quad a\mapsto a_L$$ is a
contractive linear map, where 
$a_L$ is given  by $ a_L(b)=ab$ for all $b\in{\mathcal A}$ and $\|a_L\|_M\leq\|a\|_X\leq \|a\|_{\mathcal A}$. This implies that ${\mathcal A}\subseteq B_{\mathcal A}({\mathcal A}, X^*)$.

Define $\widetilde{Q}_X$ to be the range of the linear map $\Gamma: {\mathcal A}\widehat{\otimes} X\rightarrow {\mathcal A}^*$ defined by $\Gamma (a\otimes f)=a\cdot f$. Then $\widetilde{Q}_X$ is a Banach space when equipped with the quotient norm from ${\mathcal A}\widehat{\otimes} X$. Moreover, $f\in \widetilde{Q}_X$ if and only if there are sequences $(a_i)\subseteq {\mathcal A}$ and $(f_i)\subseteq X$ with $\sum_{i=1}^\infty \|a_i\|\|f_i\|<\infty$ such that $f=\sum_{i=1}^\infty a_i\cdot f_i$.

\begin{theorem}\label{p4}
	Let ${\mathcal A}$ be a Banach algebra and let $X$ be a left Banach ${\mathcal A}$-submodule of ${\mathcal A}^*$. Then ${\mathcal A}$ is $w^*$-dense in $B_{\mathcal A}({\mathcal A}, X^*)$ if and only if $\widetilde{Q}_X$ is isometrically isomorphic to $Q_X$.
\end{theorem}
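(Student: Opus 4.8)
The plan is to show that the canonical candidate for the isomorphism is the restriction map $R$ introduced before Proposition~\ref{p3}: I will prove that $R$ always carries $Q_X$ onto $\widetilde{Q}_X$ as a metric surjection, and that it is an isometric isomorphism exactly when $\mathcal A$ is $w^*$-dense. The first step is a bridging identity between the two ways of viewing $a \in \mathcal A$, namely as the operator $a_L \in B_{\mathcal A}({\mathcal A}, X^*) = (Q_X)^*$ and as a functional in $X^*$; these images coincide, so the relation $\langle R(q), m\rangle = \langle q, m_L\rangle$ (which merely expresses that $R$ is the adjoint of $\iota$) specializes at $m = a$ to
$$\langle a_L, q\rangle = \langle R(q), a\rangle \qquad (q \in Q_X,\ a \in \mathcal A),$$
both sides being $f(ab)$ when $q = bf$. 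Since $R(q) \in X \subseteq \mathcal A^*$, the right-hand side vanishes for all $a$ precisely when $R(q) = 0$, so the pre-annihilator ${}_\perp\mathcal A = \{q \in Q_X : \langle a_L, q\rangle = 0 \text{ for all } a \in \mathcal A\}$ equals $\ker(R|_{Q_X})$. As the $w^*$-closure of $\mathcal A$ in $(Q_X)^*$ is $({}_\perp\mathcal A)^\perp$, the algebra $\mathcal A$ is $w^*$-dense in $B_{\mathcal A}({\mathcal A}, X^*)$ if and only if $R$ is injective on $Q_X$.

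The second step is to check that $R \colon Q_X \to \widetilde{Q}_X$ is a metric surjection. Writing $J \colon \mathcal A \widehat{\otimes} X \to Q_X$ for the map of Theorem~\ref{2.2}, the norm formula of Theorem~\ref{2.3} identifies $\|\cdot\|_M$ on $Q_X$ with the quotient norm induced by $J$, so $J$ is a metric surjection; likewise $\Gamma \colon \mathcal A \widehat{\otimes} X \to \widetilde{Q}_X$ is a metric surjection by the definition of the quotient norm on $\widetilde{Q}_X$. Since $R \circ J = \Gamma$ on elementary tensors (both send $a \otimes f$ to $a \cdot f$), one has $\ker J \subseteq \ker \Gamma$, and factoring the defining infima through $J$ gives
$$\|\omega\|_{\widetilde{Q}_X} = \inf\{\|\xi\|_{\mathcal A \widehat{\otimes} X} : \Gamma\xi = \omega\} = \inf\{\|q\|_M : q \in Q_X,\ R(q) = \omega\}.$$
Hence $R$ is a metric surjection and $\widetilde{Q}_X \cong Q_X/\ker R$ isometrically.

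Combining the two steps finishes the proof: $\mathcal A$ is $w^*$-dense exactly when $\ker R = \{0\}$, which for a metric surjection is precisely the condition that $R$ be an isometric isomorphism of $Q_X$ onto $\widetilde{Q}_X$; reading the isometric isomorphism in the statement as this canonical map $R$, the converse is then immediate. I expect the main obstacle to be the second step—ensuring that the two quotient norms genuinely agree, so that $R$ is a metric surjection and not merely a contractive bijection. This rests on invoking Theorem~\ref{2.3} to realize $\|\cdot\|_M$ as the quotient norm coming from $\mathcal A \widehat{\otimes} X$ and then chasing the factorization $\Gamma = R \circ J$ through the infima, where the inclusion $\ker J \subseteq \ker \Gamma$ and the compatibility of the two embeddings of $\mathcal A$ into $B_{\mathcal A}({\mathcal A}, X^*)$ require care.
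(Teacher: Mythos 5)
Your proof is correct, and at bottom it runs on the same machinery as the paper's: your canonical map $R|_{Q_X}$ is precisely the paper's map $\Lambda\bigl(\sum_{i} a_i f_i\bigr)=\sum_{i} a_i\cdot f_i$, your bridging identity $\langle a_L,q\rangle=\langle R(q),a\rangle$ is the paper's computation $\langle a_L,\sum_i a_i f_i\rangle=\langle a,\sum_i a_i\cdot f_i\rangle$, and both arguments convert $w^*$-density into a separation/injectivity statement via the bipolar theorem (\cite[Proposition 2.6.6]{meg}) together with the norm formula of Theorem \ref{2.3}. Where you genuinely differ is in how the isometry is obtained. The paper proves $\Lambda$ is isometric only \emph{after} it has injectivity in hand (hence only under the density hypothesis), since matching the two infima requires $\Lambda^{-1}$ to carry representations of $\Lambda q$ back to representations of $q$. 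You instead isolate the unconditional lemma that $R|_{Q_X}\colon Q_X\to\widetilde{Q}_X$ is a metric surjection, obtained by factoring $\Gamma=R\circ J$ through $\mathcal A\widehat{\otimes}X$ and chasing the quotient norms; this yields $\widetilde{Q}_X\cong Q_X/\ker(R|_{Q_X})$ isometrically with no hypothesis at all, after which the theorem collapses to ``injective $\Leftrightarrow$ $w^*$-dense,'' the isometric isomorphism being automatic for an injective metric surjection. Your decomposition is cleaner and gives slightly more (a quotient description of $\widetilde{Q}_X$ valid in general); the paper's is shorter. One shared caveat: like the paper, your converse reads ``$\widetilde{Q}_X$ is isometrically isomorphic to $Q_X$'' as ``the canonical map is an isometric isomorphism''---an abstract isometric isomorphism between the two spaces would not obviously force $\ker(R|_{Q_X})=\{0\}$. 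You state this reading explicitly, whereas the paper adopts it silently, so this is a defect of the theorem's phrasing rather than a gap in your argument.
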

\begin{proof}
	Let ${\mathcal A}$ be $w^*$-dense in $B_{\mathcal A}({\mathcal A}, X^*).$ Then it follows from \cite[Proposition 2.6.6]{meg} that  the annihilator
	$(^{\perp}\mathcal{A})^{\perp}$ of $^{\perp}\mathcal{A}$ in  $ B_{\mathcal A}({\mathcal A}, X^*)$ can be identified with $ B_{\mathcal A}({\mathcal A}, X^*)=(Q_X)^*$, where 
	$$
	^{\perp}\mathcal{A}=\{f \in Q_X: \langle a_L, f \rangle =0 \ \text {for each } a\in {\mathcal A}\}.
	$$
	Hence, $\mathcal A$
	separates the points of 
	$ Q_X. $
	Now, define $ \Lambda:Q_X\rightarrow \widetilde{Q}_X $ by
	$$ \Lambda(\sum_{i=1}^{\infty} a_i f_i )=\sum_{i=1}^{\infty} a_i \cdot f_i.$$
	If  $ a \in \mathcal{A} $ is arbitrary, then for each sequences 
	$ (a_i)\subseteq \mathcal{A}$ and $(f_i) \subseteq X $ with $\sum_{i=1}^\infty \|a_i\|\|f_i\|<\infty,$  we have
	\begin{align*}
	\langle a_L,\sum_{i=1}^{\infty} a_i f_i \rangle
	=\sum_{i=1}^{\infty}\langle  aa_i, f_i \rangle
	=\sum_{i=1}^{\infty}\langle a,a_i \cdot  f_i \rangle
	=\langle a,\sum_{i=1}^{\infty} a_i \cdot  f_i \rangle.
	\end{align*}
	From this and the fact that $\mathcal A$
	separates the points of 
	$ Q_X, $
	we get that $ \Lambda $ is an isomorphism. Also, by Theorem \ref{2.3} it is an isometry. 
	
	Conversely, let $\widetilde{Q}_X$ be isometrically isomorphic to $Q_X.$
	Then $ \mathcal{A} $  separates the points of 
	$ Q_X$,
	which implies that 
	$  (^{\perp}\mathcal{A})^{\perp}= B_{\mathcal A}({\mathcal A}, X^*). $
	Again by \cite[Proposition 2.6.6]{meg}, ${\mathcal A}$ is $w^*$-dense in $ B_{\mathcal A}({\mathcal A}, X^*)$. 
\end{proof}

\section{The multiplier algebra $ M(A(H))$ and amenability}
A bounded linear operator on commutative Banach algebra ${\mathcal A}$ is called a multiplier if it satisfies $aT(b) = T(ab)$
for all $a, b\in{\mathcal A}$. We denote by ${\mathcal M}({\mathcal A})$ the space
of all multipliers for ${\mathcal A}$. Clearly ${\mathcal M}({\mathcal A})$ is a Banach algebra as a subalgebra of $B({\mathcal A})$ and ${\mathcal M}({\mathcal A})=B_{\mathcal A}({\mathcal A})$. For the general theory of multipliers
we refer to Larsen \cite{lar}. It is known that for a semisimple commutative Banach
algebra ${\mathcal A}$  every $T\in {\mathcal M}({\mathcal A})$ can be identified uniquely with a
bounded continuous function $\widehat{T}$ on $\Delta({\mathcal A})$, the maximal ideal space of ${\mathcal A}$.
Moreover, if  we denote
by $M({\mathcal A})$ the normed algebra of all bounded continuous functions $\varphi$ on
$\Delta({\mathcal A})$ such that $\varphi\widehat{\mathcal A}\subseteq \widehat{\mathcal A}$, then $M({\mathcal A})=\widehat{\mathcal M}({\mathcal A})$;  see \cite[Corollary 1.2.1]{lar}.

Let $H$ be an ultraspherical hypergroup associated to  a locally compact group
$G$
and a spherical
projector $\pi: C_c(G)\rightarrow C_c(G)$ which was introduced
and studied in \cite{murg2}. Let $A(H)$ denote the Fourier algebra corresponding to the hypergroup
$H$. A left Haar measure on $H$ is given by $\int_{H} f(\dot{x})d\dot{x}=\int_G f(p(x))dx$,  $f\in C_{c}(H)$, where  $p: G \rightarrow H$ is the quotient map. The Fourier space $A(H)$ is an algebra and is isometrically isomorphic to the subalgebra $A_{\pi }(G)=\lbrace u\in A(G)  :  \pi (u)=u\rbrace $ of $A(G)$ \cite[Theorem 3.10]{murg2}. Recall that the character space $\Delta(A(H))$ of $A(H)$ can be canonically identified with $H$. The Fourier algebra
$A(H)$ is semisimple, regular and Tauberian \cite[Theorem 3.13]{murg2}. As in the group case, let $\lambda$ also denote the left regular representation of $H$ on $L^2(H)$ given by 
$$
\lambda(\dot{x})(f)(\dot{y})=f(\check{\dot{x}}\ast\dot{y})\quad (\dot{x},\dot{y}\in H, f\in L^2(H))
$$
This can be extended to $L^1(H)$ by $\lambda(f)(g)=f*g$ for all $f\in L^1(H)$ and $g\in L^2(H)$. Let $C^*_{\lambda}(H)$ denote the completion of $\lambda(L^1(H))$
in $B(L^2(H))$ which is called the reduced $C^*$-algebra of $H$. The von Neumann algebra generated
by $\{\lambda(\dot{x}): \dot{x}\in H\}$ is called the von Neumann algebra of $H$, and is denoted by $VN(H)$. Note that $VN(H)$ is isometrically isomorphic to the dual of $A(H)$. Moreover, $A(H)$ can be considered as an ideal of $B_\lambda(H)$, where $B_\lambda(H)$ is the dual of $C_\lambda^*(H)$.
\begin{remark}\label{remb}
	As $A(H)$ is an ideal in $B_\lambda(H)$, there is a canonical $B_\lambda(H)$-bimodule structure on $VN(H)$. In particular, for
	$ f \in L^{1}(H) $ and $\phi\in B_{\lambda}(H) $, we obtain
	\begin{align*}
	\langle \phi \cdot \lambda(f), v \rangle
	= \langle  \lambda(f),  \phi v \rangle
	=\int f(\dot{x}) \phi(\dot{x}) v(\dot{x}) d\dot{x}
	=\langle  \lambda(\phi f),  v \rangle
	\end{align*}
	for all $ v \in A(H). $ This shows that $\phi \cdot \lambda(f)=\lambda(\phi f)\in \lambda(L^1(H))$. Since $\lambda(L^1(H))$ is norm dense in $C^*_\lambda(H)$, we conclude that $C^*_\lambda(H)$ is a $B_\lambda(H)$-bimodule.
\end{remark}
\begin{theorem}
	Let $H$ ba an ultraspherical hypergroup. Then
	$$
	M(A(H))=B_{A(H)}(A(H), C_\lambda^*(H)^*).
	$$
\end{theorem}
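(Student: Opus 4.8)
The plan is to exhibit a natural map $\Phi\colon M(A(H))\to B_{A(H)}(A(H),C_\lambda^*(H)^*)$ and prove it is a surjective isometry. Note first that we are in the Section~2 framework with ${\mathcal A}=A(H)$ and $X=C_\lambda^*(H)$, which is a left Banach $A(H)$-submodule of $VN(H)=A(H)^*$ (it is a $B_\lambda(H)$-submodule of $VN(H)$ by Remark~\ref{remb}, and $A(H)\subseteq B_\lambda(H)$). Since $C_\lambda^*(H)^*=B_\lambda(H)$ and $A(H)$ is a closed ideal of $B_\lambda(H)$, the preliminary step is to pin down the induced right $A(H)$-action on $B_\lambda(H)=C_\lambda^*(H)^*$. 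Using the pairing and bimodule relations in Remark~\ref{remb}, I would verify on the dense subset $\lambda(L^1(H))$ of $C_\lambda^*(H)$ that for $\psi\in B_\lambda(H)$, $u\in A(H)$ and $g\in L^1(H)$ one has $\langle\psi\cdot u,\lambda(g)\rangle=\int u g\,\psi\,d\dot x=\langle\psi u,\lambda(g)\rangle$, so the action is pointwise multiplication $\psi\cdot u=\psi u$, which lands in $A(H)$ by the ideal property.

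With the action identified, define $\Phi(T)=\iota\circ T$, where $\iota\colon A(H)\hookrightarrow B_\lambda(H)$ is the inclusion and $T\in M(A(H))=B_{A(H)}(A(H))$ satisfies $T(uv)=uT(v)=T(u)v$. Then $\Phi(T)(uv)=T(u)v=\Phi(T)(u)\cdot v$, so $\Phi(T)$ is a genuine right $A(H)$-module map into $B_\lambda(H)$. To see $\Phi$ is isometric I would show $\iota$ is isometric, i.e. $\|v\|_{B_\lambda(H)}=\|v\|_{A(H)}$ for $v\in A(H)$: since $v\in A(H)=VN(H)_*$ is weak*-continuous and $C_\lambda^*(H)$ is weak*-dense in $VN(H)$ with its unit ball weak*-dense in that of $VN(H)$ by the Kaplansky density theorem, the supremum defining $\|v\|_{B_\lambda(H)}$ (over the unit ball of $C_\lambda^*(H)$) equals $\|v\|_{A(H)}$. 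Hence $\|\Phi(T)\|_M=\sup_{\|u\|\le1}\|T(u)\|_{B_\lambda(H)}=\sup_{\|u\|\le1}\|T(u)\|_{A(H)}=\|T\|_M$, and injectivity of $\Phi$ is immediate from injectivity of $\iota$.

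The crux is surjectivity, namely that an arbitrary $S\in B_{A(H)}(A(H),B_\lambda(H))$ actually takes values in $A(H)$. I would argue in three moves. First, for $u,v\in A(H)$ the module relation gives $S(uv)=S(u)\cdot v=S(u)v$, and since $S(u)\in B_\lambda(H)$, $v\in A(H)$ and $A(H)$ is an ideal, $S(uv)\in A(H)$; thus $S$ maps $\mathrm{span}\,A(H)^2$ into $A(H)$. Second, because $A(H)$ is regular and Tauberian (\cite[Theorem 3.13]{murg2}), any compactly supported $v\in A(H)$ factors as $v=vw$ with $w\in A(H)$ equal to $1$ on $\mathrm{supp}(v)$, so $\mathrm{span}\,A(H)^2$ is dense in $A(H)$. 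Third, $A(H)$ is closed in $B_\lambda(H)$ (the norms agree there) and $S$ is continuous, so $S(A(H))\subseteq A(H)$. Then $T:=S$, regarded as a map $A(H)\to A(H)$, satisfies $T(uv)=S(u)v=uT(v)$ and is a multiplier with $\Phi(T)=S$. I expect the genuinely substantive points to be the density-plus-closedness argument forcing $S$ back into $A(H)$ and the identification of the module action as pointwise multiplication; the norm computations are then routine.
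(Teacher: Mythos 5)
Your proof is correct and takes essentially the same route as the paper's: both arguments reduce the problem to showing that any $S\in B_{A(H)}(A(H),B_\lambda(H))$ takes values in $A(H)$, using regularity of $A(H)$ to factor compactly supported elements, the fact that $A(H)$ is an ideal in $B_\lambda(H)$, and then density of compactly supported elements plus an approximation (closedness) argument. The additional details you supply --- identifying the right module action on $C_\lambda^*(H)^*=B_\lambda(H)$ as pointwise multiplication, and the isometry of $A(H)\hookrightarrow B_\lambda(H)$ via Kaplansky density --- are points the paper's proof uses implicitly, so they strengthen rather than change the argument.
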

\begin{proof}
	Since $A(H)$ is commutative and semisimple, it suffices to show that ${\mathcal M}(A(H))=B_{A(H)}(A(H), B_\lambda(H))$. To prove this, first note that ${\mathcal M}(A(H))\subseteq B_{A(H)}(A(H), B_\lambda(H))$. Conversely, assume that $u\in A(H)$ has compact support. By regularity of $A(H)$, there exists $v\in A(H)$ such that $v(x)=1$ for $x\in {\rm supp}(u)$. Thus,  for each $T\in B_{A(H)}(A(H), B_\lambda(H))$, we have
	$$T(u)=T(vu)=vT(u).$$
	Since $A(H)$ is an ideal in $B_\lambda(H)$, we conclude that $T(u)\in A(H)$. Moreover, since the set of compactly supported
	elements in $A(H)$ is dense in $A(H)$, a simple approximation argument shows that $T(u)\in A(H)$ for all $u\in A(H)$. Therefore, $T\in {\mathcal M}(A(H))$ as required.
\end{proof}
Let $H$ ba an ultraspherical hypergroup and let $ f\in L^{1} (H)$. Define a linear functional on $ M(A(H)) $ by
$$ \langle f , \phi \rangle = \int f(\dot{x}) \phi (\dot{x}) d\dot{x} \quad (\phi \in M(A(H))).$$
Moreover, $ \lvert \langle f,\phi \rangle\rvert\leq\|f\|_{1}\|\phi\|_{\infty}\leq\|f\|_{1}\|\phi\|_{M}$ for all $ \phi \in M(A(H)).$ Therefore, $ f $ is in $M(A(H))^{*}$
and  
$$ 
\|f\|_{M}=\sup\left\{\left\lvert \langle f,\phi \rangle\right\rvert: \phi\in M(A(H)), \|\phi\|_M\leq 1\right\}\leq \|f\|_{1}. 
$$
Put
$$Q(H):=\overline{L^{1}(H)}^{\|.\|_{M}}\subseteq M(A(H))^{*}.$$

Next we  prove that $ M(A(H))$ is a
dual Banach space for any ultraspherical hypergroup $H$ .
\begin{theorem}\label{p5}
	Let $H$ ba an ultraspherical hypergroup. Then $ Q_{C^*_\lambda(H)}= Q(H)$ and so $ M(A(H))=Q(H)^{*}$.
\end{theorem}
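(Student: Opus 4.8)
The plan is to realize the statement as the special case $\mathcal A=A(H)$, $X=C^*_\lambda(H)$ of the abstract machinery of Section 2, and then to identify the abstract predual $Q_{C^*_\lambda(H)}$ concretely with $Q(H)$. First I would record that $C^*_\lambda(H)$ really is a left Banach $A(H)$-submodule of $VN(H)=A(H)^*$: since $A(H)$ is an ideal in $B_\lambda(H)$, Remark \ref{remb} gives $u\cdot\lambda(h)=\lambda(uh)\in\lambda(L^1(H))$ for $u\in A(H)$ and $h\in L^1(H)$, and as $\lambda(L^1(H))$ is dense in $C^*_\lambda(H)$ the action keeps us inside $C^*_\lambda(H)$. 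Combining the preceding theorem $M(A(H))=B_{A(H)}(A(H),C^*_\lambda(H)^*)$ with Theorem \ref{2.2} then yields $M(A(H))=(Q_{C^*_\lambda(H)})^*$; hence once the equality $Q_{C^*_\lambda(H)}=Q(H)$ of subspaces of $M(A(H))^*$ is established, the assertion $M(A(H))=Q(H)^*$ follows at once.

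The heart of the argument is a single identification of the generating functionals. Fix $u\in A(H)$ and $h\in L^1(H)$, and let $T\in M(A(H))=B_{A(H)}(A(H),B_\lambda(H))$ correspond to its bounded continuous Gelfand transform $\phi=\widehat T$ on $H$, so that $T(u)=\phi u\in A(H)$. Using the pairing convention recorded in Remark \ref{remb}, I would compute
$$\langle u\lambda(h),T\rangle=\langle\lambda(h),T(u)\rangle=\langle\lambda(h),\phi u\rangle=\int_H h(\dot{x})\phi(\dot{x})u(\dot{x})\,d\dot{x}=\langle hu,T\rangle,$$
where on the right $hu\in L^1(H)$ is viewed in $M(A(H))^*$ through $\langle g,\phi\rangle=\int_H g\phi$. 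Thus $u\lambda(h)=hu$ in $M(A(H))^*$. Since $\lambda(L^1(H))$ is dense in $C^*_\lambda(H)$ and $\|uf\|_M\le\|u\|\,\|f\|_{C^*_\lambda(H)}$, each $uf$ lies in the $\|\cdot\|_M$-closure of $\{u\lambda(h)\}$, so $Q_{C^*_\lambda(H)}$ equals the $\|\cdot\|_M$-closure of the linear span of $\{hu:h\in L^1(H),\,u\in A(H)\}$, a subspace of $L^1(H)$. In particular $Q_{C^*_\lambda(H)}\subseteq Q(H)$.

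For the reverse inclusion I would show that the linear span of $\{hu\}$ is $\|\cdot\|_1$-dense in $L^1(H)$; this suffices because $\|\cdot\|_M\le\|\cdot\|_1$ on $L^1(H)$, so $\|\cdot\|_1$-density forces $\|\cdot\|_M$-density. Given $g\in C_c(H)$ with support $K$, regularity of $A(H)$ provides $u\in A(H)$ with $u\equiv 1$ on $K$, whence $g=gu$ lies in the span of $\{hu\}$; since $C_c(H)$ is $\|\cdot\|_1$-dense in $L^1(H)$, the claim follows. Consequently $Q(H)=\overline{L^1(H)}^{\|\cdot\|_M}\subseteq\overline{\operatorname{span}\{hu\}}^{\|\cdot\|_M}=Q_{C^*_\lambda(H)}$, and the two inclusions give $Q_{C^*_\lambda(H)}=Q(H)$.

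The main obstacle I anticipate is the pairing bookkeeping in the central computation: one must be careful that $T(u)$ is read as $\phi u\in A(H)\subseteq B_\lambda(H)$, that the $VN(H)$–$A(H)$ duality of $\lambda(h)$ against an $A(H)$-function is exactly $\int_H h(\dot{x})(\cdot)(\dot{x})\,d\dot{x}$ with no involution or conjugation twist (as fixed by Remark \ref{remb}), and that the embedding $L^1(H)\hookrightarrow M(A(H))^*$ used to define $Q(H)$ is the one matching this duality. Once the identity $u\lambda(h)=hu$ is secured, both density steps are routine.
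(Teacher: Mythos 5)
Your proof is correct and takes essentially the same route as the paper's: the key step in both is the pairing computation $\langle u\lambda(h),T\rangle=\int_H h\,\phi\, u\,d\dot{x}$ together with the regularity trick of choosing $u\equiv 1$ on a compact support, followed by density of $\lambda(L^1(H))$ in $C^*_\lambda(H)$ and of $C_c(H)$ in $L^1(H)$. You merely spell out both inclusions explicitly (in particular the identity $u\lambda(h)=hu$ giving $Q_{C^*_\lambda(H)}\subseteq Q(H)$), which the paper compresses into the assertion that there is an isometry between dense subspaces.
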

\begin{proof}
	Suppose that $ f \in C_c(H)$. Using the regularity of $A(H)$, there exists $ u \in A(H) $
	such that $ u|_{\text{supp}(f)}\equiv 1$.
	Thus, $ f = uf $ is in $ Q_{C^{*}_{\lambda}(H)} $ 
	and 
	$ \langle uf, \phi\rangle =  \langle f, \phi\rangle =\int_{H} f(\dot{x}) \phi (\dot{x}) d\dot{x}$
	for all $ \phi \in M(A(H))$.
	Therefore, there is an isometry between the dense subspace of 
	$ Q_{C^{*}_{\lambda}(H)} $ 
	and a dense subspace of
	$ (L^{1}(H), \lVert \cdot\rVert _{M}) $.
	This shows that 
	$ Q_{C^{*}_{\lambda}(H)} $
	is the completion of 
	$L^{1}(H)$
	with respect to the norm $ \lVert \cdot\rVert_{M}$.
\end{proof}

\begin{theorem}\label{lema}
	Let $ H $ be an ultraspherical hypergroup on locally compact group $G$. Then $G$ is amenable if and only if $B_\lambda(H)=M(A(H))$.
\end{theorem}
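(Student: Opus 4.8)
The plan is to prove the two inclusions separately, using as a backbone the always-valid contractive inclusion $B_\lambda(H)\subseteq M(A(H))$. Indeed, since $A(H)$ is an ideal in $B_\lambda(H)$ (Remark \ref{remb}), every $\phi\in B_\lambda(H)$ acts on $A(H)$ by $u\mapsto \phi u\in A(H)$ with $\|\phi u\|_{A(H)}\le\|\phi\|_{B_\lambda(H)}\|u\|_{A(H)}$, so $\phi\in M(A(H))$ and $\|\phi\|_M\le\|\phi\|_{B_\lambda(H)}$. Hence it remains only to decide when the reverse inclusion $M(A(H))\subseteq B_\lambda(H)$ holds, and the claim is that this is exactly amenability of $G$.

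For the forward direction I would assume $G$ amenable. Then $A(H)$ admits a bounded approximate identity $(e_\alpha)$ (the hypergroup analogue of Leptin's theorem for ultraspherical hypergroups, via $A(H)\cong A_\pi(G)$; cf. \cite{murg2}), say with $\sup_\alpha\|e_\alpha\|_{A(H)}\le M$. Fix $T\in M(A(H))$. By the identification $M(A(H))=B_{A(H)}(A(H),C_\lambda^*(H)^*)$ established above, $T$ is a module map taking values in $A(H)\subseteq B_\lambda(H)$, and the net $(T(e_\alpha))$ is bounded in $B_\lambda(H)=C_\lambda^*(H)^*$ since $\|T(e_\alpha)\|_{B_\lambda(H)}=\|T(e_\alpha)\|_{A(H)}\le\|T\|_M\,M$. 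By Banach--Alaoglu I would pass to a subnet with $T(e_\beta)\to\phi$ weak$^*$ in $B_\lambda(H)$, for some $\phi\in B_\lambda(H)$. For fixed $u\in A(H)$ the module identity gives $u\,T(e_\beta)=T(u e_\beta)\to T(u)$ in the norm of $A(H)$, because $u e_\beta\to u$ and $T$ is continuous. On the other hand, multiplication by $u$ is weak$^*$-continuous on $B_\lambda(H)$ — it is the adjoint of the action $a\mapsto u\cdot a$ on $C_\lambda^*(H)$, which is well defined by Remark \ref{remb} — so $u\,T(e_\beta)\to u\phi$ weak$^*$. Comparing the two limits yields $T(u)=u\phi$ for all $u\in A(H)$, i.e. $T$ is multiplication by $\phi$, whence $T\in B_\lambda(H)$. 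This gives $M(A(H))\subseteq B_\lambda(H)$, and equality follows.

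For the converse I would assume $B_\lambda(H)=M(A(H))$. The identity operator on $A(H)$ is a multiplier whose associated continuous function on $\Delta(A(H))\cong H$ is the constant function $1$; hence $1\in M(A(H))=B_\lambda(H)$. But $1\in B_\lambda(H)$ means precisely that the trivial representation of $H$ is weakly contained in the regular representation $\lambda$, and for an ultraspherical hypergroup $H$ associated to $G$ this weak containment is equivalent to amenability of $G$ (again via the structure theory in \cite{murg2}). Therefore $G$ is amenable, which closes the equivalence.

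The step I expect to require the most care is the limit exchange in the forward direction: one must check that multiplication by $u$ is genuinely weak$^*$-continuous on $B_\lambda(H)$, so that the weak$^*$-cluster point $\phi$ truly induces the multiplier $T$, and that this cluster point lands in $B_\lambda(H)$ itself rather than in a larger bidual. The other delicate ingredients are the two facts I would need to invoke from hypergroup theory, namely the precise form of Leptin's theorem giving a bounded approximate identity for $A(H)$ when $G$ is amenable, and the characterization of amenability of $G$ through the weak containment $1\prec\lambda$, i.e. $1\in B_\lambda(H)$.
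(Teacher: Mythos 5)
Your standing inclusion $B_\lambda(H)\subseteq M(A(H))$ and your forward direction are essentially sound, though they differ from the paper's route: the paper disposes of the forward implication by simply quoting \cite[Theorem 4.2]{murg2} ($G$ amenable $\Rightarrow B_\lambda(H)=M(A(H))$), whereas you re-prove it by the classical bounded-approximate-identity/weak$^*$-compactness argument. That argument is correct: the weak$^*$-continuity of multiplication by $u\in A(H)$ on $B_\lambda(H)$ is indeed justified by the module structure $\phi\cdot\lambda(f)=\lambda(\phi f)$ of Remark \ref{remb}, and the limit exchange works. One attribution slip: the bounded approximate identity for $A(H)$ under amenability of $G$ is not in \cite{murg2} in the form you need; in this paper it is taken from \cite[Theorem 4.4]{alagh}, the Leptin-type theorem for ultraspherical hypergroups.

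The converse, however, contains a genuine gap. You correctly reduce it to showing $1\in B_\lambda(H)$ (this is also the paper's first step), but you then dismiss the implication ``$1\in B_\lambda(H)\Rightarrow G$ amenable'' as a known weak-containment characterization, ``via the structure theory in \cite{murg2}.'' No such result is available to quote: this implication is precisely the substance of the theorem, and it is what the paper spends almost its entire proof establishing. The point is delicate because for hypergroups the classical Hulanicki--Godement correspondence breaks down: weak containment $1\prec\lambda_H$ is a statement about $H$, while the theorem demands amenability of the underlying \emph{group} $G$, and amenability properties of $H$ and of $G$ are not interchangeable for general hypergroups. The paper bridges this as follows: from $1\in B_\lambda(H)$ it extracts a \emph{bounded} net $(u_\alpha)\subseteq A(H)$ with $u_\alpha\to 1$ in the $\sigma(B_\lambda(H),C_\lambda^*(H))$-topology, convolves twice against a normalized $f\in C_c(H)$ (using that bounded weak$^*$-convergence is uniform on norm-compact subsets of $C_\lambda^*(H)$, so that $f\ast u_\alpha\to 1$ uniformly on compacta), and thereby produces a bounded net $(w_\alpha)$ in $A(H)$ with $\lVert uw_\alpha-u\rVert_{A(H)}\to 0$ for all $u\in A(H)$; only then does the Leptin-type theorem \cite[Theorem 4.4]{alagh} convert the existence of this bounded approximate identity into amenability of $G$. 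Without this construction (or some equivalent argument), your converse direction is an assertion rather than a proof.
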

\begin{proof}
	Suppose that $G$ is amenable. Then $B_\lambda(H)=M(A(H))$ by \cite[Theorem 4.2]{murg2}. Conversely, assume that $B_\lambda(H)=M(A(H))$. Then the constant function $1$ belongs to $B_\lambda(H)$. Since $A(H)$ is dense in $B_\lambda(H)$ with respect to the $\sigma(B_\lambda(H), C^{*}_{\lambda}(H))$-topology, there exists a net $(u_\alpha)$ in $A(H)$ such that
	$ u_{\alpha}\rightarrow 1$ in the $ \sigma(B_{\lambda}(H), C^{*}_{\lambda}(H)) $-topology and $c=\sup_\alpha\|u_\alpha\|_{A(H)}<\infty$.
	Choose $ f $ in $ C_{c}(H) $
	with $f\geq 0$ and
	$\|f\|_1 =1$. 
	For each $\alpha$, define
	$ u'_{\alpha}=f\ast u_\alpha$. 
	Notice first that $(u'_\alpha)\subseteq A(H)$ and
	$$
	\lVert u'_{\alpha}\rVert_{A(H)}
	\leq\lVert f\rVert_{1} \lVert u_{\alpha}\rVert_{A(H)}\leq c
	$$
	for all $ \alpha$. In fact, for each $g\in L^1(H)$ with $\|\lambda(g)\|_{C^*_\lambda(H)}\leq 1$, we have
	\begin{eqnarray*}
		|\langle f\ast u_\alpha, \lambda(g)\rangle|&=&|\int_H\int_Hf(\dot{y})u_\alpha(\check{\dot{y}}\ast \dot{x})g(\dot{x})d\dot{y}d\dot{x}|\\
		&=&|\int_Hf(\dot{y})\langle _{\dot{y}}u_\alpha, g\rangle d\dot{y}|\\
		&\leq& \int_H|f(\dot{y})|\|_{\dot{y}}u_\alpha\|_{A(H)} d\dot{y}\\
		&\leq& \|f\|_1\|u_\alpha\|_{A(H)}\leq c.
	\end{eqnarray*}
	Let $K\subseteq H$ be compact. Then the set $\{\lambda( _{\check{\dot{x}}}f): \dot{x}\in K\}$ form a compact subset of $C^*_\lambda(H)$, where the function $_{\check{\dot{x}}}f$ on $H$ is defined by $_{\check{\dot {x}}}f({\dot{y}})=f(\dot{x}\ast\dot{y})$ for all $\dot{y}\in H$.
	Since  $ u_{\alpha}\rightarrow 1$ in the $ \sigma(B_{\lambda}(H), C^{*}_{\lambda}(H))$-topology and the net $(u_\alpha)$ is bounded in $B_\lambda(H)$, the convergence is uniform on compact subsets of $C^{*}_{\lambda}(H)$. Hence,
	\begin{eqnarray*}
		u'_\alpha(\dot{x})=\langle {\check{u_\alpha}}, \lambda({_{\check{\dot{x}}}f})\rangle
		\rightarrow \langle 1, \lambda({_{\check{\dot{x}}}f})\rangle=
		\int_H{_{\check{\dot{x}}}}f(\dot{y})d\dot{y}=1
	\end{eqnarray*}
	uniformly on $K$, where $\check{u_\alpha}(\dot{x})=u_\alpha(\check{\dot{x}})$ for all $\dot{x}\in H$, and noticing that $\check{u_\alpha}\in B_\lambda(H)$ by \cite[Remark 2.9]{murg1}. 
	Again choose $ f $ in $ C_{c}(H) $
	with $f\geq 0$ and
	$\|f\|_1 =1$ and put $w_\alpha=f\ast u'_\alpha$ for all $\alpha$. Then $\|w_\alpha\|_{A(H)}\leq c$.
	Assume that 
	$ u\in A(H)\cap C_{c}(H). $ 
	Next, we show that 
	$ \lVert w_{\alpha}u-u \rVert_{A(H)}\rightarrow0$. In fact,
	if we put $ K=\text{supp}(f)^{\check{}}\ast \text{supp}(u)$,
	then for each
	$ \dot{x} \in \text{supp}(u) $
	we have
	\begin{eqnarray*}
	w_{\alpha}(\dot{x})&=&\int_{H}^{}f(\dot{y})u'_{\alpha}(\check{\dot{y}}\ast \dot{x})d\dot{y}\\
	&=&\int_{H}^{}f(\dot{y})({1}_{K}u'_{\alpha})(\check{\dot{y}}\ast \dot{x})d\dot{y}\\
	&=&(f\ast({1}_{K}u'_{\alpha}))(\dot{x}).
	\end{eqnarray*}
	Hence,
	$ uw_{\alpha}= u(f\ast({1}_{K}u'_{\alpha}))$, where ${1}_{K}$ denote the characteristic
	function of $K$.
	Similarly,
	$ u= u(f\ast{1}_{K})$.  Since
	$ \lVert{1}_{K}u'_{\alpha}-{1}_{K}\rVert_{2}\rightarrow0$, it follows that 
	$ \lVert uw_{\alpha}-u \rVert_{A(H)}\rightarrow0$. Finally, 
	since the net $ (w_{\alpha})$ is bounded and $ A(H)\cap C_{c}(H) $ is dense in
	$ A(H),$
	a straightforward approximation argument
	shows that
	$ \lVert uw_{\alpha}-u\rVert_{A(H)}\rightarrow0 $ for all $ u $ in $ A(H)$. Thus, $G$ is amenable by \cite[Theorem 4.4]{alagh}.
\end{proof}
\begin{corollary}
	Let $ H $ be an ultraspherical hypergroup on locally compact group $G$. Then the following hold.
	
	{\rm (i)} Let
	$ f \in M(A(H))^{*} $. 
	Then
	$ f \in Q(H)$
	if and only if there exist sequences 
	$ (u_{i})\subseteq A(H) $
	and
	$ (f_{i})\subseteq {C^{*}_{\lambda}(H)} $
	with
	$ \sum_{i=1}^{\infty}\lVert u_i\rVert_{A(H)} \lVert f_i\rVert_{C^{*}_{\lambda}(H)}<\infty  $ such that
	$f= \sum_{i=1}^{\infty} u_i f_i$ 
	and
	$$\lVert f\rVert_{M}= \inf \left\{\sum_{i=1}^{\infty} \lVert u_i\rVert_{A(H)}\lVert f_i\rVert_{C^{*}_{\lambda}(H)} :
	f= \sum\limits_{i=1}^{\infty} u_i f_i , ~ 
	\sum_{i=1}^{\infty}\lVert u_i\rVert_{A(H)} \lVert f_i\rVert_{C^{*}_{\lambda}(H)}<\infty\right\}.$$
	
	{\rm (ii)} $G$ is amenable if and only if  for any 
	$ f \in {C^{*}_{\lambda}(H)} $
	and 
	$ \epsilon>0 $
	there  exist sequences 
	$ (u_{i})\subseteq A(H) $
	and
	$ (f_{i})\subseteq {C^{*}_{\lambda}(H)} $
	such that
	$ f= \sum_{i=1}^{\infty} u_i f_i$ 
	on
	$ B_{\lambda}(H) $
	with
	$$\sum_{i=1}^{\infty}\lVert u_i\rVert_{A(H)} \lVert f_i\rVert_{C^{*}_{\lambda}(H)}<\lVert f\rVert_{C^{*}_{\lambda}(H)} + \epsilon.$$
\end{corollary}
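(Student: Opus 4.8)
The plan is to obtain both parts by specialising the abstract results of Section~2 to $\mathcal A=A(H)$ and $X=C^*_\lambda(H)$, using two facts already in hand: the identification $M(A(H))=B_{A(H)}(A(H),C^*_\lambda(H)^*)$ from the theorem preceding the definition of $Q(H)$, and the equality $Q(H)=Q_{C^*_\lambda(H)}$ from Theorem~\ref{p5}. Before applying anything I would confirm that $C^*_\lambda(H)$ really is a left Banach $A(H)$-submodule of $A(H)^*=VN(H)$; this is exactly the module structure recorded in Remark~\ref{remb}. With that, part~(i) is a direct transcription of Theorem~\ref{2.3}: for $f\in M(A(H))^*$, membership in $Q(H)=Q_{C^*_\lambda(H)}$ is equivalent to the existence of an absolutely summable representation $f=\sum_i u_if_i$ with $u_i\in A(H)$ and $f_i\in C^*_\lambda(H)$, and $\|f\|_M$ is given by the stated infimum. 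There is nothing further to do beyond substituting $A(H)$, $C^*_\lambda(H)$ and $C^*_\lambda(H)^*=B_\lambda(H)$ into the statement of Theorem~\ref{2.3}.

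For part~(ii) I would first recast the displayed condition in terms of the restriction map $R\colon Q(H)\to C^*_\lambda(H)$ introduced before Proposition~\ref{p3}, which satisfies $R\big(\sum_i u_if_i\big)=\sum_i u_i\cdot f_i$ and $R(Q(H))\subseteq C^*_\lambda(H)$. The clause ``$f=\sum_i u_if_i$ on $B_\lambda(H)$'' means precisely $R\big(\sum_i u_if_i\big)=f$. Since part~(i) gives $\|\xi\|_M=\inf\sum_i\|u_i\|\,\|f_i\|$ over representations of $\xi\in Q(H)$, and since $R$ is contractive so that $\|f\|_{C^*_\lambda(H)}=\|R(\xi)\|\le\|\xi\|_M$ whenever $R(\xi)=f$, the condition in~(ii) says exactly that
$$
\inf\{\|\xi\|_M:\xi\in Q(H),\ R(\xi)=f\}=\|f\|_{C^*_\lambda(H)}\qquad(f\in C^*_\lambda(H));
$$
that is, $R$ is a metric surjection. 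By the computation $\langle R(af),m\rangle=\langle af,m_L\rangle$ recorded before Proposition~\ref{p3}, the adjoint $R^*\colon B_\lambda(H)\to Q(H)^*=M(A(H))$ is the canonical inclusion $\iota\colon\phi\mapsto\phi_L$; hence, by the standard duality between metric surjections and isometric embeddings, $R$ being a metric surjection is equivalent to $\|\phi\|_M=\|\phi\|_{B_\lambda(H)}$ for every $\phi\in B_\lambda(H)$.

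It then remains to match the norm equality $\|\cdot\|_M=\|\cdot\|_{B_\lambda(H)}$ on $B_\lambda(H)$ with amenability of $G$. For the forward direction I would invoke Theorem~\ref{lema}: amenability of $G$ yields $B_\lambda(H)=M(A(H))$, and since this identification is isometric by \cite[Theorem~4.2]{murg2} the two norms coincide on $B_\lambda(H)$; then $R$ is a metric surjection, and unwinding $\|\xi\|_M$ through part~(i) produces, for each $f$ and each $\epsilon>0$, a representation with $\sum_i\|u_i\|\,\|f_i\|<\|f\|_{C^*_\lambda(H)}+\epsilon$. For the converse, the norm equality on $B_\lambda(H)$ restricts to $A(H)$, where $\|u\|_{A(H)}=\|u\|_{B_\lambda(H)}$, so $\|\cdot\|_{A(H)}$ and $\|\cdot\|_M$ agree, and in particular are equivalent, on $A(H)$; amenability of $G$ then follows from the hypergroup form of Losert's theorem, the criterion underlying the equivalence announced in the introduction (cf.\ \cite{los1}).

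I expect this converse step to be the main obstacle: the soft part of the argument is purely the duality bookkeeping of Section~2, but passing from equivalence of $\|\cdot\|_{A(H)}$ and $\|\cdot\|_M$ on $A(H)$ to amenability of $G$ is the genuinely hard analytic input. A secondary point to verify carefully is that $B_\lambda(H)=M(A(H))$ is isometric, and not merely a topological isomorphism, in the amenable case, since part~(ii) demands the metric---not just the topological---surjectivity of $R$; I would make sure the cited form of \cite[Theorem~4.2]{murg2} delivers the isometric statement.
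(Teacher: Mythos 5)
Your part (i) is exactly the paper's proof: the paper disposes of it in one line as an immediate consequence of Theorem \ref{2.3}, combined with the identification $Q(H)=Q_{C^*_\lambda(H)}$ from Theorem \ref{p5}, which is precisely your substitution argument. Your recasting of the condition in (ii) as metric surjectivity of $R\colon Q(H)\to C^*_\lambda(H)$, equivalently isometry of the canonical embedding $R^*=\iota\colon B_\lambda(H)\to M(A(H))$, is correct, and is in fact a more careful rendering of what the paper states loosely as ``the condition of (ii) is equivalent to $C^*_\lambda(H)=Q(H)$ (equivalently, $B_\lambda(H)=M(A(H))$)''. Your forward direction (amenability of $G$ implies the condition, via $B_\lambda(H)=M(A(H))$ and \cite[Theorem 4.2]{murg2}) also agrees with the paper, and your flag that the \emph{isometric} form of that identification is needed is a legitimate point which the paper does not make explicit.

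The genuine gap is your converse. You reduce to the equality $\lVert\cdot\rVert_M=\lVert\cdot\rVert_{B_\lambda(H)}$ on $B_\lambda(H)$, restrict it to $A(H)$, and then invoke ``the hypergroup form of Losert's theorem''. No such unconditional theorem exists in this paper or in its references: \cite{los1} concerns locally compact groups only, and the paper proves the implication ``equivalence of $\lVert\cdot\rVert_{A(H)}$ and $\lVert\cdot\rVert_M$ on $A(H)$ $\Rightarrow$ $G$ amenable'' only under the additional hypothesis that $A(H)$ is $w^*$-dense in $M(A(H))$ (Proposition \ref{3.7}(iii)) --- a hypothesis you do not have here --- and, moreover, that conditional statement is itself \emph{deduced from} Theorem \ref{lema}, not proved independently. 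So your citation inverts the paper's logical order: in this setting the hard analytic input replacing Losert's theorem is Theorem \ref{lema}, whose proof starts from $1\in B_\lambda(H)$ and manufactures a bounded net acting as an approximate identity on $A(H)$, then applies \cite[Theorem 4.4]{alagh}. The paper's converse for (ii) therefore runs: the condition gives $C^*_\lambda(H)=Q(H)$, equivalently $B_\lambda(H)=M(A(H))$, and then Theorem \ref{lema} yields amenability. To repair your argument you should follow the same route: from metric surjectivity of $R$ argue that the isometric, $w^*$-closed copy $R^*(B_\lambda(H))=(\ker R)^\perp$ is all of $M(A(H))$ --- equivalently, that $R$ is also injective; this onto-ness is the step the paper itself glosses over --- so that $1\in B_\lambda(H)$, and then quote Theorem \ref{lema}. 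Appealing to a Losert-type theorem for $A(H)$ is not an available step.
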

\begin{proof}
	(i). It is an immediate consequence of Theorem \ref{2.3}.
	
	(ii). It follows from (i) that  the condition of (ii) is equivalent to   $ C^{*}_{\lambda}(H) = Q(H)$  (equivalently, $ B_{\lambda}(H)= M(A(H)) $). However  this is equivalent to  $ G $ being amenable by Lemma \ref{lema}.
	
\end{proof}

\begin{proposition}\label{2.5}
	Let $H$ be an ultraspherical hypergroup and let $X$ be a Banach $A(H)$-submodule of $VN(H)$ with $C^*_\lambda(H)\subseteq X$. Then $B_\lambda(H)$ is a subalgebra of 
	$B_{A(H)}(A(H), X^*)$ such that $\|\phi\|_M\leq\|\phi\|_{B_\lambda(H)}$ for all $\phi\in B_\lambda(H)$.
\end{proposition}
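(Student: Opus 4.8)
The plan is to realize each $\phi\in B_\lambda(H)$ as the multiplication operator it induces on $A(H)$, with values reinterpreted inside $X^*$. Concretely, since $A(H)$ is an ideal in $B_\lambda(H)$, for $u\in A(H)$ we have $\phi u\in A(H)$, and via the canonical inclusion $A(H)\hookrightarrow X^*$ (regarding $u\in A(H)$ as the functional $f\mapsto\langle f,u\rangle$ on $X\subseteq VN(H)$, exactly as in the paragraph preceding Theorem \ref{p4}) I would set
$$\langle T_\phi(u),f\rangle=\langle f,\phi u\rangle\qquad(f\in X),$$
where the right-hand pairing is the $VN(H)$–$A(H)$ duality. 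I would then show that $\phi\mapsto T_\phi$ is an injective, contractive algebra homomorphism of $B_\lambda(H)$ into $B_{A(H)}(A(H),X^*)$ satisfying $\|T_\phi\|_M\le\|\phi\|_{B_\lambda(H)}$; identifying $B_\lambda(H)$ with its image yields the statement.

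First I would check that $T_\phi$ is a bounded right $A(H)$-module map. For $u,v\in A(H)$ and $f\in X$, using the induced right action on $X^*$ and the left action on $X\subseteq VN(H)=A(H)^*$,
\begin{align*}
\langle T_\phi(u)\cdot v,f\rangle&=\langle T_\phi(u),v\cdot f\rangle=\langle v\cdot f,\phi u\rangle\\
&=\langle f,(\phi u)v\rangle=\langle f,\phi(uv)\rangle=\langle T_\phi(uv),f\rangle,
\end{align*}
so $T_\phi(uv)=T_\phi(u)\cdot v$. For the norm, the key input is the standard ideal inequality $\|\phi u\|_{A(H)}\le\|\phi\|_{B_\lambda(H)}\|u\|_{A(H)}$; since the functional norm of $\phi u$ on $X$ is computed over a subset of the unit ball of $VN(H)$, one gets $\|T_\phi(u)\|_{X^*}\le\|\phi u\|_{A(H)}\le\|\phi\|_{B_\lambda(H)}\|u\|_{A(H)}$, whence $\|T_\phi\|_M\le\|\phi\|_{B_\lambda(H)}$.

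Next I would verify injectivity and multiplicativity. If $T_\phi=0$, then $\langle f,\phi u\rangle=0$ for all $u\in A(H)$ and all $f\in X$; restricting $f$ to $C^*_\lambda(H)\subseteq X$ forces $\phi u=0$ in $B_\lambda(H)=C^*_\lambda(H)^*$, hence $\phi u=0$ in $A(H)$ for every $u$, and regularity of $A(H)$ then gives $\phi\equiv0$. For multiplicativity, observe that by construction each $T_\phi$ takes values in $A(H)\subseteq X^*$; on this range the maps compose, and for $\phi,\psi\in B_\lambda(H)$ and $u\in A(H)$ one has $T_\phi\big(T_\psi(u)\big)=T_\phi(\psi u)=\phi\psi u=T_{\phi\psi}(u)$. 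Thus $\{T_\phi:\phi\in B_\lambda(H)\}$ is closed under the composition product, which on maps with range in $A(H)$ is precisely the product inherited from $M(A(H))=B_{A(H)}(A(H),C^*_\lambda(H)^*)$; moreover, when $\phi=v\in A(H)$ the operator $T_v$ coincides with the embedding $v\mapsto v_L$ of Section 2, so $\phi\mapsto T_\phi$ extends that embedding and $B_\lambda(H)$ sits inside $B_{A(H)}(A(H),X^*)$ as a subalgebra.

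The main obstacle is the bookkeeping with the module actions and with the ambient product: one must use consistently the left $A(H)$-action on $X$ (inherited from $VN(H)=A(H)^*$) and the induced right action on $X^*$ in the module-map computation, and one must identify the product witnessing ``subalgebra'' as the composition product on the subspace of maps with range in $A(H)$ — which is legitimate precisely because $C^*_\lambda(H)\subseteq X$ guarantees that $T_\phi$ never leaves $A(H)$. The remaining analytic point, the ideal inequality $\|\phi u\|_{A(H)}\le\|\phi\|_{B_\lambda(H)}\|u\|_{A(H)}$, is standard for the $A(H)$–$B_\lambda(H)$ pair.
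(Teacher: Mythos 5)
Your proposal is correct and follows essentially the same route as the paper: realize $\phi\in B_\lambda(H)$ as the multiplication operator $u\mapsto\phi u$, use the ideal property $\phi u\in A(H)\subseteq VN(H)^*$ restricted to $X$, and bound the multiplier norm via $\|\phi u\|_{X^*}\le\|\phi u\|_{A(H)}\le\|\phi\|_{B_\lambda(H)}\|u\|_{A(H)}$. You in fact supply details the paper leaves implicit (the right-module identity, injectivity of $\phi\mapsto T_\phi$, and the composition product making the image a subalgebra, all of which correctly exploit the hypothesis $C^*_\lambda(H)\subseteq X$), so this is a faithful and somewhat more complete version of the paper's argument.
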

\begin{proof}
	Let $ u\in A(H) $ and $ \phi \in B_\lambda(H).  $
	Then 
	$ \phi u \in A(H) \subseteq VN(H)^*. $
	Thus $ \phi u \in X^*. $
	From this and the fact that
	$C^*_\lambda(H)\subseteq X,$ 
	we get that
	$$
	\lVert\phi u \rVert_{A(H)}
	=\lVert\phi u \rVert_{C^*_\lambda(H)}\leq \lVert\phi u \rVert_{X}\leq\lVert\phi  \rVert_{C^*_\lambda(H)}\lVert u \rVert_{A(H)}.$$
	Consequently,
	$ \lVert \phi\rVert_{M}\leq\lVert \phi\rVert_{B_\lambda(H)}. $
\end{proof}

Let $ H $ be an ultraspherical hypergroup. We say that $H$ has the approximation property if there is a net $ (u_\alpha) \subseteq A(H) $
such that
$ u_\alpha\stackrel{w^*}\rightarrow 1 $ in $M(A(H))$, i.e. in $ \sigma(M(A(H)), Q(H))$-topology.

For an ultraspherical hypergroup $H$, we put
\begin{center}
	$\overline{A(H)}^{w^*}:=$ the $w^*$-closure of $ A(H) $ in $ M(A(H)). $
\end{center}

\begin{proposition}\label{3.4}
	Let $ H $ be an ultraspherical hypergroup. Then $A( H) $ is $w^*$-dense in $M(A(H))$
	if and only if 
	$ H $ has the approximation property.
\end{proposition}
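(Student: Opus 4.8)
The plan is to treat the two implications separately: the forward one is essentially formal, while the converse hinges on $M(A(H))$ being a dual Banach algebra with predual $Q(H)$. I would begin by recording that the constant function $1$ always lies in $M(A(H))$, since it induces the identity multiplier on $A(H)$; thus $1$ is the unit of $M(A(H))$ and $\|1\|_M=1$. With this the forward implication is immediate: if $A(H)$ is $w^*$-dense in $M(A(H))$, then in particular $1\in\overline{A(H)}^{w^*}$, so there is a net $(u_\alpha)\subseteq A(H)$ with $u_\alpha\stackrel{w^*}\rightarrow 1$, which is exactly the approximation property.

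For the converse the crucial step is to show that for each fixed $\phi\in M(A(H))$ the multiplication operator $L_\phi:\psi\mapsto\phi\psi$ on $M(A(H))=Q(H)^*$ is $w^*$-continuous. Since a map between dual spaces is $w^*$-continuous exactly when it is the adjoint of a bounded operator on the preduals, I would produce a pre-adjoint $S_\phi:Q(H)\rightarrow Q(H)$. On the dense subspace $L^1(H)\subseteq Q(H)$ coming from Theorem \ref{p5} I set $S_\phi(f)=\phi f$; this lands in $L^1(H)$ because $\phi$ is a bounded continuous function with $\|\phi\|_\infty\leq\|\phi\|_M$, and the identity $\langle f,\phi\psi\rangle=\int_H f\phi\psi=\langle\phi f,\psi\rangle$ for $f\in L^1(H)$, $\psi\in M(A(H))$ exhibits it as the required pre-adjoint. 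The estimate $\|\phi f\|_M\leq\|\phi\|_M\|f\|_M$ shows $S_\phi$ is bounded, so it extends to all of $Q(H)$ and $S_\phi^*=L_\phi$ by density; hence $L_\phi$ is $w^*$-continuous.

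Granting this, I would finish as follows. Let $(u_\alpha)\subseteq A(H)$ witness the approximation property, $u_\alpha\stackrel{w^*}\rightarrow 1$, and fix an arbitrary $\phi\in M(A(H))$. Because $\phi$ is a multiplier, $u_\alpha\phi=\phi\cdot u_\alpha\in A(H)$ for every $\alpha$. On the other hand, the $w^*$-continuity of $L_\phi$ gives $u_\alpha\phi=L_\phi(u_\alpha)\stackrel{w^*}\rightarrow L_\phi(1)=\phi$. Thus $\phi\in\overline{A(H)}^{w^*}$, and since $\phi$ was arbitrary, $A(H)$ is $w^*$-dense in $M(A(H))$.

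I expect the main obstacle to be the middle step, namely verifying that $Q(H)$ is invariant under the predual action $f\mapsto\phi f$, so that $M(A(H))$ is genuinely a dual Banach algebra; this is where the concrete description $Q(H)=\overline{L^1(H)}^{\|\cdot\|_M}$ from Theorem \ref{p5} together with the pointwise boundedness $\|\phi\|_\infty\leq\|\phi\|_M$ of multipliers is essential. Once $w^*$-continuity of left multiplication is in place, the remainder is formal duality.
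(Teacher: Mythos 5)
Your proof is correct and follows essentially the same route as the paper: the forward direction is the same formal observation that $1\in M(A(H))$, and your pre-adjoint $S_\phi$ (defined on $L^1(H)$ by $f\mapsto\phi f$, extended by density and the bound $\|\phi f\|_M\leq\|\phi\|_M\|f\|_M$) is exactly the paper's "simple approximation argument" showing $\phi f\in Q(H)$ for all $f\in Q(H)$, after which your $w^*$-continuity of $L_\phi$ is the paper's computation $\langle u_\alpha\phi,f\rangle=\langle u_\alpha,\phi f\rangle\to\langle 1,\phi f\rangle=\langle\phi,f\rangle$. The only difference is packaging: you phrase the key step in dual-Banach-algebra language (pre-adjoint, hence $w^*$-continuous multiplication), while the paper carries out the same duality computation inline.
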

\begin{proof}
	We know that 
	$ 1\in M(A(H)). $
	Therefore, if 
	$A( H) $ is $w^*$-dense in $M(A(H)),$
	then $ 1\in  \overline{A(H)}^{w^*}. $
	Hence $ H $
	has the approximation property. For the converse, assume that $ H $ has the approximation property.
	Since $ L^{1}(H) $ 
	is dense in $Q(H)$,
	a simple approximation argument shows that
	$ \phi f \in Q(H)$
	for all 
	$ \phi \in M(A(H)) $ and 
	$ f \in Q(H)$.
	Now,  if there exists a net
	$ (u_\alpha) \subseteq A(H) $
	such that
	$ u_\alpha\xrightarrow{w^{*}} 1$ in $M(A(H))$,
	then for each
	$ \phi \in M(A(H))$,
	we have 
	$$\langle u_\alpha \phi, f \rangle 
	=\langle u_\alpha , \phi f \rangle
	\rightarrow \langle 1 , \phi f \rangle 
	=\langle \phi  f \rangle\quad (f  \in Q(H)). $$
	Consequently, $  \phi $ is $ w^{*} $-limit of   the net 
	$ (u_\alpha \phi)\subseteq A(H) $.
	Hence, 
	$\overline{A(H)}^{w^*}= M(A(H)),$
	as required.
\end{proof}
In what follows, for an ultraspherical hypergroup $H$, we put
\begin{center}
	$ Q^L(H):= $
	the Banach space of the restriction of elements in 
	$Q(H)$
	to
	$ \overline{A(H)}^{w^*}. $
\end{center}
\begin{proposition}\label{3.5}
	Let $ H $ be an ultraspherical hypergroup. Then the following hold.
	
	{\rm (i)}  $ \overline{A(H)}^{w^*}  $ is an ideal of $ M(A(H))$.
	
	{\rm (ii)} $ \overline{A(H)}^{w^*}=Q^L(H)^{*}. $
	
	Moreover, $ Q^L(H)$ is isometrically isomorphic to the completion of 
	$ L^{1}(G) $ with respect to the norm 
	$$\lVert f \rVert _{L}= \sup \left\{\left\lvert \int_{H} f(\dot{x}) \phi(\dot{x}) d\dot{x}\right\rvert : \phi \in \overline{A(H)}^{w^*},  \lVert\phi\rVert_{M}\leq1 \right\}.$$
	
\end{proposition}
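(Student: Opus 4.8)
The plan is to derive both statements from the duality $M(A(H)) = Q(H)^*$ of Theorem~\ref{p5}, together with the module invariance recorded inside the proof of Proposition~\ref{3.4}, namely that $\phi f \in Q(H)$ whenever $\phi \in M(A(H))$ and $f \in Q(H)$. Throughout write $Y := \overline{A(H)}^{w^*}$ and $W := {}^{\perp}Y = \{\,f \in Q(H) : \langle f, \psi\rangle = 0 \text{ for all } \psi \in Y\,\}$.

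For part~(i), fix $\phi \in M(A(H))$ and consider the multiplication map $L_\phi \colon M(A(H)) \to M(A(H))$, $\psi \mapsto \phi\psi$. First I would check that $L_\phi$ is $w^*$-continuous. By the module invariance above, $T_\phi \colon Q(H) \to Q(H)$, $f \mapsto \phi f$, is a well-defined bounded operator with $\|T_\phi\| \le \|\phi\|_M$, and a one-line pairing computation using $\langle \psi, \phi f\rangle = \langle \phi\psi, f\rangle$ (here $M(A(H))$ is commutative) shows $L_\phi = (T_\phi)^*$; hence $L_\phi$ is the adjoint of a bounded operator on the predual and is $w^*$-continuous. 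Since $\phi u \in A(H)$ for every $u \in A(H)$, the map $L_\phi$ carries $A(H)$ into $A(H)$, and being $w^*$-continuous it carries the $w^*$-closure $Y$ into the $w^*$-closure of $A(H)$, that is $L_\phi(Y) \subseteq Y$. As $\phi$ is arbitrary, $Y$ is an ideal of $M(A(H))$.

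For part~(ii), I would exploit that $Y$ is $w^*$-closed in $Q(H)^* = M(A(H))$ by construction, so the bipolar relation \cite[Proposition 2.6.6]{meg} gives $Y = ({}^{\perp}Y)^{\perp} = W^{\perp}$. The standard isometric identification $(Q(H)/W)^* \cong W^{\perp}$ (see \cite{meg}) then reads $(Q(H)/W)^* \cong Y$. It remains to recognise $Q(H)/W$ as $Q^L(H)$: the restriction map $\rho \colon Q(H) \to Y^*$ has kernel exactly $W$ and image $Q^L(H)$, so it induces a bijection $\bar\rho \colon Q(H)/W \to Q^L(H)$. The one delicate point is that $\bar\rho$ is isometric, i.e. the restriction norm $\|\cdot\|_L$ coincides with the quotient norm $\mathrm{dist}(\cdot, W)$; this I would get from the Hahn--Banach distance formula $\mathrm{dist}(f, W) = \sup\{\,|\langle f, \Lambda\rangle| : \Lambda \in W^{\perp},\ \|\Lambda\| \le 1\,\}$, whose right-hand side equals $\|f\|_L$ once $W^{\perp} = Y$ is inserted. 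Consequently $Q^L(H) \cong Q(H)/W$ isometrically and $Q^L(H)^* = (Q(H)/W)^* = Y = \overline{A(H)}^{w^*}$.

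For the concluding clause I would invoke that $Q(H)$ is the $\|\cdot\|_M$-completion of $L^1(H)$ (Theorem~\ref{p5}). The quotient map $Q(H) \to Q(H)/W \cong Q^L(H)$ is norm-decreasing and onto, hence sends the $\|\cdot\|_M$-dense subspace $L^1(H)$ onto a $\|\cdot\|_L$-dense subspace of $Q^L(H)$, and the image of $f \in L^1(H)$ carries precisely the norm $\|f\|_L$ displayed in the statement; therefore $Q^L(H)$ is the completion of $(L^1(H), \|\cdot\|_L)$. I expect the principal obstacle to be the isometric step in part~(ii) --- matching the restriction norm with the quotient norm --- since the whole argument depends on having the bipolar identity $({}^{\perp}Y)^{\perp} = Y$, which in turn rests on $\overline{A(H)}^{w^*}$ being $w^*$-closed from the very definition.
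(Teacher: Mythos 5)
Your proposal is correct, and while your part (i) is essentially the paper's argument in different clothing, your part (ii) takes a genuinely different route. For (i), the paper picks a net $(u_\alpha)\subseteq A(H)$ with $u_\alpha\rightarrow\psi$ in the $w^*$-topology and uses the module invariance $\phi f\in Q(H)$ (from the proof of Proposition \ref{3.4}) to conclude $u_\alpha\phi\rightarrow\phi\psi$ weak-star; your identity $L_\phi=(T_\phi)^*$ is an operator-theoretic repackaging of the same pairing computation $\langle\phi\psi,f\rangle=\langle\psi,\phi f\rangle$, so this part buys nothing new beyond tidiness. For (ii), however, the paper proves that the identity (restriction) map $I:\overline{A(H)}^{w^*}\rightarrow Q^L(H)^*$ is an isometry by Hahn--Banach and is \emph{onto} by extending a given $\psi\in Q^L(H)^*$ to a norm-one functional on $(\overline{A(H)}^{w^*})^*$ and invoking Goldstine's theorem to produce a bounded net in $\overline{A(H)}^{w^*}$ whose $w^*$-limit in $M(A(H))=Q(H)^*$ is a preimage of $\psi$. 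You instead run the standard duality machine: the bipolar identity $({}^{\perp}Y)^{\perp}=Y$ (legitimate precisely because $Y=\overline{A(H)}^{w^*}$ is $w^*$-closed by definition), the canonical isometry $(Q(H)/{}^{\perp}Y)^*\cong({}^{\perp}Y)^{\perp}$, and the Hahn--Banach distance formula to show that restriction induces an isometric bijection $Q(H)/{}^{\perp}Y\cong Q^L(H)$. Your route is more economical, replacing the extension-plus-Goldstine step by textbook quotient duality, and it has the extra merit of making explicit that $Q^L(H)$ is complete (being isometric to a Banach quotient), a fact the paper's phrase ``the Banach space of the restriction of elements in $Q(H)$'' tacitly presumes; the paper's route, in exchange, exhibits the preimage in $\overline{A(H)}^{w^*}$ concretely. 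Both treatments of the final clause coincide: push the $\lVert\cdot\rVert_M$-dense copy of $L^1(H)$ inside $Q(H)$ (Theorem \ref{p5}) through the restriction map (note that $L^1(G)$ in the statement is evidently a typo for $L^1(H)$), and both share the same implicit appeal to regularity of $A(H)$ to see that $\lVert\cdot\rVert_L$ is a norm rather than a seminorm on $L^1(H)$.
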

\begin{proof}
	(i). If 
	$ \phi \in M(A(H)) $
	and 
	$ \psi \in \overline{A(H)}^{w^*}, $ 
	then there exists a net 
	$ (u_{\alpha})\subseteq A(H) $ 
	such that 
	$ u_{\alpha}\xrightarrow{w^{*}} \psi$. By the same argument as used in the proof of Proposition \ref{3.4} 
	and using the fact that
	$ (u_{\alpha}\phi) \subseteq A(H), $
	it is straightforward to conclude that 
	$ \phi \psi \in \overline{A(H)}^{w^*}. $
	Hence, $ \overline{A(H)}^{w^*} $
	is an ideal of 
	$ M(A(H))$.
	
	(ii). As an immediate consequence of the Hahn-Banach theorem the identity map 
	$ I:\overline{A(H)}^{w^*}\rightarrow Q^L(H)^{*}$ is an isometry. Let $ \psi \in  Q^L(H)^{*}  $
	with $ \lVert \psi\rVert=1$.
	Since 
	$ Q^L(H)$
	is a subspace of 
	$ (\overline{A(H)}^{w^*})^{*}$,
	we extend 
	$ \psi $
	to a linear functional $\phi$ on  
	$ (\overline{A(H)}^{w^*})^{*} $ with $\|\phi\|=1$.
	By the Goldstine's theorem,
	there is a net $(u_\alpha)$ in unit ball of 
	$\overline{A(H)}^{w^*}  $
	such that 
	$ u_{\alpha}\rightarrow\phi $
	in the 
	$ \sigma ((\overline{A(H)}^{w^*})^{**}, (\overline{A(H)}^{w^*})^{*})$-topology.
	In particular,
	$ \langle u_{\alpha}, f \rangle \rightarrow \langle \phi, f \rangle=\langle \psi, f \rangle $  for all 
	$ f \in Q(H). $
	Consequently, 
	$ \psi \in \overline{A(H)}^{w^*} $ and so 
	$ I $ is onto.
	Hence, 
	$ \overline{A(H)}^{w^*} $  
	is isometrically isomorphic to 
	$ Q^L(H)^{*}. $ Repeating the arguments in the proof of Theorem \ref{p5}, it is straightforward to prove the last statement.
\end{proof}

\begin{proposition}\label{3.7}
	Let $ H $ be an ultraspherical hypergroup on locally compact group $G$. Then the following hold.
	
	{\rm(i)} $ A(H) $  is $w^*$-dense in  
	$ M(A(H))$ if and only if the restriction map 
	$ R:Q(H)\rightarrow C^{*}_{\lambda}(H)$
	is injective.
	
	{\rm(ii)} The norms $ \lVert \cdot\rVert_{A(H)} $ and $ \lVert \cdot\rVert_{M} $ are equivalent on $ A(H) $ if and only if the restriction map 
	$ R:Q(H)\rightarrow C^{*}_{\lambda}(H)$ 
	is surjective.
	
	{\rm(iii)} If $ A(H) $  is $w^*$-dense in  
	$ M(A(H))$, then $G$ is amenable if and only if  the norms $ \lVert \cdot\rVert_{A(H)} $and $ \lVert \cdot\rVert_{M} $ are equivalent on $ A(H) $.
\end{proposition}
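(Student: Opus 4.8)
The plan is to read all three parts off the duality $M(A(H))=Q(H)^{*}$ from Theorem \ref{p5}, by analysing the restriction map $R\colon Q(H)\to C^{*}_{\lambda}(H)$ and its adjoint. First I would record two basic identifications. Since $R$ is the restriction to $Q(H)$ of $\iota^{*}$, where $\iota\colon B_{\lambda}(H)\hookrightarrow M(A(H))$ is the inclusion, its adjoint is $R^{*}=\iota$; moreover $R$ carries $f\in L^{1}(H)\subseteq Q(H)$ to $\lambda(f)$, so $R$ always has dense range. I would also use that for $u\in A(H)$ and $q\in Q(H)$ one has $\langle u_{L},q\rangle=\langle u,R(q)\rangle$: this holds on the dense set $\{uf\}$ since $\langle u_{L},af\rangle=\langle f,ua\rangle=\langle ua,f\rangle=\langle u,a\cdot f\rangle=\langle u,R(af)\rangle$, and then extends by continuity.

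For part (i), I would show that $A(H)$ is $w^{*}$-dense in $M(A(H))$ if and only if ${}^{\perp}A(H)=\{0\}$ in $Q(H)$: by \cite[Proposition 2.6.6]{meg} the annihilator $({}^{\perp}A(H))^{\perp}$ equals $\overline{A(H)}^{w^{*}}$, so $w^{*}$-density is equivalent to ${}^{\perp}A(H)=\{0\}$. Now the identity $\langle u_{L},q\rangle=\langle u,R(q)\rangle$ shows that $q\in{}^{\perp}A(H)$ precisely when $\langle u,R(q)\rangle=0$ for all $u\in A(H)$. Since $R(q)\in C^{*}_{\lambda}(H)\subseteq VN(H)=A(H)^{*}$, the latter says exactly $R(q)=0$. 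Hence ${}^{\perp}A(H)=\ker R$, and $A(H)$ is $w^{*}$-dense exactly when $R$ is injective. (Alternatively this follows from Theorem \ref{p4}, because $R$ coincides with the map $\Lambda$ used there, whose injectivity is the separation of points of $Q_{X}$.)

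For part (ii), I would invoke Proposition \ref{p3} with $X=C^{*}_{\lambda}(H)$: it gives that $R$ is surjective if and only if the norms $\|\cdot\|_{X}$ and $\|\cdot\|_{M}$ are equivalent on $X^{*}=B_{\lambda}(H)$, equivalently $\iota=R^{*}$ is bounded below with closed range (via \cite[Theorem 3.1.22]{meg}). One direction of the stated claim is then immediate, since $A(H)$ sits isometrically in $B_{\lambda}(H)$ (the $A(H)$- and $B_{\lambda}(H)$-norms agree on $A(H)$), so equivalence on $B_{\lambda}(H)$ restricts to equivalence on $A(H)$. The reverse passage---from equivalence on $A(H)$ to equivalence on all of $B_{\lambda}(H)$---is the delicate point; I would attack it using that $A(H)$ is $\sigma(B_{\lambda}(H),C^{*}_{\lambda}(H))$-dense in $B_{\lambda}(H)$ together with the weak-$*$ lower semicontinuity of the norms, approximating a given $\phi\in B_{\lambda}(H)$ by a bounded net from $A(H)$. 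This reconciliation between the $A(H)$-level and the $B_{\lambda}(H)$-level norm equivalence is, I expect, the main obstacle of the whole proposition.

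For part (iii), I would assume $A(H)$ is $w^{*}$-dense, so $R$ is injective by part (i). By Lemma \ref{lema}, $G$ is amenable if and only if $B_{\lambda}(H)=M(A(H))$; by duality ($R^{*}=\iota$) this says $R$ is a bijective isomorphism, and under the standing injectivity it is the same as $R$ being surjective (using \cite[Theorems 3.1.17 and 3.1.21]{meg}). By part (ii), $R$ is surjective if and only if $\|\cdot\|_{A(H)}$ and $\|\cdot\|_{M}$ are equivalent on $A(H)$. Chaining these equivalences shows that, under $w^{*}$-density, $G$ is amenable if and only if the two norms are equivalent on $A(H)$.
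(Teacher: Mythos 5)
Your parts (i) and (iii) are sound. In fact your (i) is cleaner than the paper's: the identity $\langle u_L,q\rangle=\langle u,R(q)\rangle$, verified on elements $af$ and extended by continuity, gives ${}^{\perp}A(H)=\ker R$ in one stroke, whereas the paper proves injectivity from density by approximation and, for the converse, detours through $w^*$-density of $B_\lambda(H)$ and the argument of Proposition \ref{3.5}. Your (iii) is structured exactly as the paper's (combine (i), (ii) and Theorem \ref{lema}). The problem is the direction of (ii) that you yourself flag as the main obstacle: equivalence of $\lVert\cdot\rVert_{A(H)}$ and $\lVert\cdot\rVert_M$ on $A(H)$ implies surjectivity of $R$. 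What you offer there is a plan, not a proof, and the plan as stated does not close.

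Concretely: to run your approximation argument you need, for each $\phi\in B_\lambda(H)$, a net $(u_\alpha)\subseteq A(H)$ with $u_\alpha\to\phi$ in $\sigma(B_\lambda(H),C^*_\lambda(H))$ \emph{and} $\limsup_\alpha\lVert u_\alpha\rVert_M\leq \lVert\phi\rVert_M$ (up to a universal constant); only then does $w^*$-lower semicontinuity give $\lVert\phi\rVert_{B_\lambda(H)}\leq\liminf_\alpha\lVert u_\alpha\rVert_{A(H)}\leq C\liminf_\alpha\lVert u_\alpha\rVert_M\leq C\lVert\phi\rVert_M$. But plain $\sigma(B_\lambda(H),C^*_\lambda(H))$-density gives no control on $\lVert u_\alpha\rVert_M$, and lower semicontinuity of the multiplier norm runs the wrong way ($\lVert\phi\rVert_M\leq\liminf_\alpha\lVert u_\alpha\rVert_M$). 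A bipolar computation in the duality $(Q(H),M(A(H)))$ shows that the norm-controlled density you need --- the $\lVert\cdot\rVert_M$-unit ball of $A(H)$ being $w^*$-dense in the $\lVert\cdot\rVert_M$-unit ball of $B_\lambda(H)$ --- is essentially equivalent to the inequality $\lVert\phi\rVert_{B_\lambda(H)}\leq C'\lVert\phi\rVert_M$ you are trying to prove; what one gets for free is only $\lVert u_\alpha\rVert_M\leq\lVert\phi\rVert_{B_\lambda(H)}$, which makes the argument circular. The paper avoids this entirely by an operator-theoretic route: norm equivalence on $A(H)$ means the inclusion $i:A(H)\to M(A(H))$ has closed range; by \cite[Theorem 3.1.21]{meg}, applied twice, $i^{**}:A(H)^{**}\to M(A(H))^{**}$ has norm-closed range (is bounded below), and since $B_\lambda(H)$ is norm-closed in $A(H)^{**}=VN(H)^*$, the equivalence of $\lVert\cdot\rVert_{B_\lambda(H)}$ and $\lVert\cdot\rVert_M$ passes to $B_\lambda(H)$; Proposition \ref{p3} then yields surjectivity of $R$. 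You need this bidual/closed-range argument (or a genuine substitute) to complete (ii); and since your (iii) quotes both directions of (ii), the gap propagates to the implication ``norm equivalence $\Rightarrow$ amenability'' there.
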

\begin{proof}
	(i).  Let $ A(H) $  be $w^*$-dense in  
	$ M(A(H)).$
	If 
	$f \in Q(H)$ with
	$ R(f)=0, $
	then  
	we have
	$ \langle f,u\rangle=\langle R(f), u\rangle=0$ for all  $ u \in A(H) $.
	Hence, a simple approximation argument gives that
	$ \langle R(f), \phi\rangle=0 $
	for all
	$ \phi \in M(A(H)). $
	Therefore, $ R $ is injective.
	Conversely, if 
	$ R $ is injective, then 
	$ B_{\lambda}(H)  $
	is $w^*$-dense in  
	$ M(A(H))$ by \cite[Theorem 3.1.17]{meg}.
	By an argument used  in the proof of  Proposition \ref{3.5}, we conclude that the identity map
	$ I: \overline{A(H)}^{w^*} \rightarrow   C^{*}_{\lambda}(H)^{*}   $
	is surjective. It follows that 
	$  \overline{A(H)}^{w^*}=  \overline{B_{\lambda}(H)}^{w^*}. $
	Hence, $ A(H) $ is $w^*$-dense in   $ M(A(H)).$
	
	(ii). Let $ \lVert \cdot\rVert_{A(H)} $ and $ \lVert \cdot\rVert_{M} $ be equivalent on $ A(H)$. We first show that the norm on  $ B_{\lambda}(H)  $  is equivalent to the multiplier norm.
	Let $ i:A(H)\rightarrow M(A(H)) $ be the inclusion map. Then $ i $ is bounded and has $ \lVert \cdot\rVert_{M} $-closed range. It follows from \cite[Theorem 3.1.21]{meg} that $ i^*(M(A(H))^*) $ is $w^*$-closed in $ A(H)^* $. Again, by \cite[Theorem 3.1.21]{meg}, $ i^{**}(A(H)^{**})$ is norm-closed in $ M(A(H))^{**} $. From this and the fact that 
	$ B_{\lambda}(H)  $ is norm-closed in $A(H)^{**}$, we conclude that the $ \lVert \cdot\rVert_{B_{\lambda}(H)} $-norm and the multiplier norm are equivalent on  $ B_{\lambda}(H)  $. Therefore,  $R $ is surjective  by Proposition \ref{p3}. 
	
	Conversely,  suppose that $ R  $ is surjective. Then it follows from Proposition \ref{p3} that the norms $ \lVert \cdot\rVert_{B_\lambda(H)} $ and $ \lVert \cdot\rVert_{M} $ are equivalent on $ B_\lambda(H)$ and hence on $A(H)$.

	(iii). Suppose first that $G$ is amenable. Then $A(H)$ has a bounded approximate identity by \cite[Theorem 4.4]{alagh}. It follows easily that  the norms 
	$ \lVert  \cdot  \rVert_{A(H)} $
	and
	$ \lVert \cdot \rVert_{M} $
	are equivalent on
	$A(H)$. Conversely, assume that  the norms 
	$ \lVert  \cdot  \rVert_{A(H)} $
	and
	$ \lVert \cdot \rVert_{M} $
	are equivalent on
	$ {A(H)} $.
	If  $ {A(H)} $ is $ w^{*} $-dense in  
	$ M(A(H))$, then by (i) and (ii)
	the restriction map is bijective. It follows that 
	$Q(H)$ is isometrically isomorphic to  
	$ C^{*}_{\lambda}(H)$, which implies that
	$ 1 \in M(A(H))=B_{\lambda}(H)$.
	Therefore,  $ G $ is amenable by Theorem \ref{lema}.
\end{proof}
\begin{remark}
	Identifying $\ell^1(H)$ with the subspace $\lambda(\ell^1(H))$ of $VN(H)$,
	we denote the norm closure of 
	$\ell^{1}(H)$
	in
	$ VN(H) $ by
	$C_\delta(H)$.
	Let $ f=\sum \alpha_{i}\lambda({\dot{x_{i}}})\in \ell^{1}(H)$
	and
	$ u \in A(H)$. Then
	$$u\cdot f=\sum \alpha_{i}  u(\dot{x_{i}})\lambda({\dot{x_{i}}})\in {C_\delta(H)},$$
	and
	$ \lVert u\cdot f\rVert_{{C_\delta(H)}} \leq \lVert u\rVert_{\infty} 
	\lVert f\rVert_{{C_\delta(H)}}
	\leq \lVert u\rVert_{A(H)} 
	\lVert f\rVert_{{C_\delta(H)}}$. Hence, 
	${C_\delta(H)}$
	is a Banach $ A(H)$-submodule of $ VN(H)$.
	Also, note that
	$ {C_\delta(H)}^{*}\subseteq \ell^{\infty}(H)$.
\end{remark}
\begin{proposition}\label{3.8}
	Let $ H $ be an ultraspherical hypergroup on locally compact group $G$. Then the following hold.
	
	{\rm(i)}
	$B_{A(H)}(A(H), {C_\delta(H)}^*)$
	consists of functions 
	$ \phi \in \ell^{\infty}(H) $
	such that the pointwise multiplication map
	$ T_{\phi}:A(H)\rightarrow  {C_\delta(H)}^*, u\mapsto \phi u$
	is a bounded operator.
	
	{\rm(ii)}
	$ Q_{C_\delta(H)}$
	is equal to the completion of 
	$\ell^{1}(H) $ with respect to the norm
	$$ \lVert f \rVert_M= \sup \left\{\left\lvert\sum f(\dot{x})\phi(\dot{x})\right\rvert:\phi \in B_{A(H)}(A(H), C_\delta(H)^*),  \lVert \phi\rVert\leq1\right\}.$$
	Furthermore,
	$M(A(H)) \subseteq B_{A(H)}(A(H), C_\delta(H)^*),$
	and the corresponding inclusion map
	is contractive.
\end{proposition}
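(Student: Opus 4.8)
The plan is to treat the two parts separately, with part (i) supplying the pointwise-multiplier description that turns part (ii) into a variant of Theorem \ref{p5}. Throughout I would use that $\Delta(A(H))=H$ with characters given by the point masses $\lambda(\dot x)\in VN(H)$, that $A(H)$ is regular, that $\|\lambda(\dot x)\|_{VN(H)}=1$, and that $\ell^1(H)$ is norm dense in $C_\delta(H)$, so that the embedding $C_\delta(H)^*\hookrightarrow\ell^\infty(H)$, $m\mapsto(\langle m,\lambda(\dot x)\rangle)_{\dot x}$, is injective and each $T(u)$ may be regarded as a bounded function on $H$.

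For (i), given $T\in B_{A(H)}(A(H),C_\delta(H)^*)$ I would exploit the right module identity together with $v\cdot\lambda(\dot x)=v(\dot x)\lambda(\dot x)$ to obtain, for $u,v\in A(H)$,
$$T(uv)(\dot x)=\langle T(u)\cdot v,\lambda(\dot x)\rangle=v(\dot x)\,T(u)(\dot x)=u(\dot x)\,T(v)(\dot x).$$
By regularity of $A(H)$ there is, for each $\dot x$, some $u$ with $u(\dot x)\neq0$, so $\phi(\dot x):=T(u)(\dot x)/u(\dot x)$ is a well-defined function independent of the choice of $u$, and $T(u)=\phi u$ for every $u$, i.e. $T=T_\phi$. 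Boundedness of $\phi$ follows from $|\phi(\dot x)|\,|u(\dot x)|=|\langle T(u),\lambda(\dot x)\rangle|\le\|T\|_M\|u\|_{A(H)}$ together with $\sup_{\|u\|\le1}|u(\dot x)|=\|\lambda(\dot x)\|_{VN(H)}=1$, which gives $\|\phi\|_\infty\le\|T\|_M$. The converse inclusion is the routine verification that any $\phi\in\ell^\infty(H)$ for which $T_\phi$ maps $A(H)$ boundedly into $C_\delta(H)^*$ is automatically a module map.

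For (ii), I would specialise the general machinery of Section 2 to $\mathcal A=A(H)$ and $X=C_\delta(H)$, so that $Q_{C_\delta(H)}$ is the $\|\cdot\|_M$-closure of $\mathrm{span}\{uf\}$ in $B_{A(H)}(A(H),C_\delta(H)^*)^*$ and, by Theorem \ref{2.2}, that space is the dual of $Q_{C_\delta(H)}$. The key computation, using (i), is that for $f=\sum_i\alpha_i\lambda(\dot x_i)\in\ell^1(H)$ and $u\in A(H)$ the functional $uf$ acts by $\langle uf,T_\phi\rangle=\sum_i\alpha_i u(\dot x_i)\phi(\dot x_i)$, so that $uf$ coincides with $u\cdot f\in\ell^1(H)$ paired against $\phi$. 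Choosing, by regularity, $u$ equal to $1$ on a finite set shows every point mass, hence every finitely supported function, lies in $Q_{C_\delta(H)}$; approximating a general $f\in C_\delta(H)$ by elements of $\ell^1(H)$ in $\|\cdot\|_{C_\delta(H)}$ and invoking $\|uf\|_M\le\|u\|\,\|f\|_{C_\delta(H)}$ shows that $Q_{C_\delta(H)}$ is exactly the $\|\cdot\|_M$-closure of the image of $\ell^1(H)$. Since the dual-space norm of $f$ is precisely $\sup\{|\sum f(\dot x)\phi(\dot x)|:\|\phi\|\le1\}$ by (i), and since $\|\cdot\|_M\le\|\cdot\|_1$ makes the finitely supported functions dense, $Q_{C_\delta(H)}$ is the completion of $\ell^1(H)$ in the stated norm. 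For the final inclusion I would note that $A(H)\hookrightarrow C_\delta(H)^*$ contractively (restrict $u\in A(H)$ to $C_\delta(H)\subseteq VN(H)$), so that for $\phi\in M(A(H))$ one has $T_\phi(u)=\phi u\in A(H)\subseteq C_\delta(H)^*$ with $\|\phi u\|_{C_\delta(H)^*}\le\|\phi u\|_{A(H)}\le\|\phi\|_M\|u\|_{A(H)}$, whence $\phi\in B_{A(H)}(A(H),C_\delta(H)^*)$ and the inclusion is contractive.

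I expect the main obstacle to be part (i): reconstructing the symbol $\phi$ and, above all, proving it bounded. The boundedness rests on the normalisation $\|\lambda(\dot x)\|_{VN(H)}=1$ (equivalently, that point evaluations are norm-one characters of $A(H)$), so I would make sure this property of ultraspherical hypergroups is available before invoking it. Once (i) is in hand, the density and norm-identification arguments in (ii) follow the pattern of the proof of Theorem \ref{p5} with only bookkeeping changes, and the contractive inclusion of $M(A(H))$ is immediate.
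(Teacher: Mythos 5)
Your overall route is the paper's: in (i) you recover the symbol from the identity $v(\dot x)T(u)(\dot x)=u(\dot x)T(v)(\dot x)$ plus regularity (the paper does the same thing with compactly supported bump functions equal to $1$ at the point), and your part (ii) and the contractive inclusion $M(A(H))\subseteq B_{A(H)}(A(H),C_\delta(H)^*)$ follow the paper's argument essentially verbatim, via Theorem \ref{2.2} and the computation $\langle uf,T_\phi\rangle=\sum_i\alpha_iu(\dot x_i)\phi(\dot x_i)$.

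There is, however, one genuine flaw, precisely at the point you flagged: the normalisation $\lVert\lambda(\dot x)\rVert_{VN(H)}=1$ is \emph{not} available for ultraspherical hypergroups, and is in fact false in the motivating examples. Take the double coset hypergroup $H=G//K$ with $G=SL(2,\mathbb R)$ and $K=SO(2)$: under the spherical Plancherel transform, $\lambda(\dot x)$ acts as multiplication by $\lambda\mapsto\varphi_\lambda(x)$ over the tempered spherical functions, so
$$\lVert\lambda(\dot x)\rVert_{VN(H)}=\sup_{\lambda}\lvert\varphi_\lambda(x)\rvert=\Xi(x)<1\qquad(\dot x\neq\dot e),$$
where $\Xi$ is the Harish-Chandra function, which moreover tends to $0$ at infinity. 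Consequently your estimate, as written, only gives $\lvert\phi(\dot x)\rvert\le\lVert T\rVert_M/\lVert\lambda(\dot x)\rVert_{VN(H)}$, which is not uniformly bounded, and "making sure the property is available" cannot succeed. The repair is pure book-keeping and needs no normalisation at all: keep the factor $\lVert\lambda(\dot x)\rVert_{C_\delta(H)}$ in the first inequality,
$$\lvert\phi(\dot x)\rvert\,\lvert u(\dot x)\rvert=\lvert\langle T(u),\lambda(\dot x)\rangle\rvert\le\lVert T\rVert_M\,\lVert u\rVert_{A(H)}\,\lVert\lambda(\dot x)\rVert_{C_\delta(H)},$$
and then take the supremum over the unit ball of $A(H)$. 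Since $VN(H)=A(H)^*$ and $C_\delta(H)$ carries the norm of $VN(H)$, the left-hand supremum is $\lvert\phi(\dot x)\rvert\,\lVert\lambda(\dot x)\rVert_{C_\delta(H)}$; as $\lambda(\dot x)\neq0$ the factor cancels and $\lVert\phi\rVert_\infty\le\lVert T\rVert_M$. Note that this corrected bound is also what underwrites $\lVert\cdot\rVert_M\le\lVert\cdot\rVert_1$ on $\ell^1(H)$ in your part (ii). (For what it is worth, the paper never makes the boundedness of its symbol $\tilde\phi$ explicit, so once repaired your write-up supplies a detail the published proof glosses over.)
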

\begin{proof}
	(i). Let
	$ \phi \in \ell^{\infty}(H) $ be such that
	$ T_{\phi}:A(H)\rightarrow C_\delta(H)^* $ 
	is a bounded linear operator. Then since 
	$$  T_{\phi}(u v)   = \phi u v= u  T_{\phi} (v) \quad (u, v \in A(H)),$$
	it follows that $ T_{\phi}\in B_{A(H)}(A(H) , C_\delta(H)^*). $
	For the reverse inclusion, let
	$ \phi \in B_{A(H)}(A(H), C_\delta(H)^*)$.
	Define
	$ \tilde{\phi}:H\rightarrow \mathbb{C}$ by $  \tilde{\phi}(\dot{x})=\langle\phi(u), \lambda(\dot{x})\rangle$,
	where 
	$ u $ denotes a function in 
	$ A(H)\cap C_{c}(H) $ with $ u(\dot{x})=1. $
	Then it is well defined. In fact, 
	if $ v $ is another function in 
	$ A(H)\cap C_{c}(H) $ such that
	$ v(\dot{x})=1$, then we put 
	$ K= \text{supp}(u)\cup \text{supp}(v) $ and choose
	$ w \in  A(H)\cap C_{c}(H) $ such that
	$ w|_{K}\equiv 1$. 
	Then
	\begin{eqnarray*}
	\langle\phi(u), \lambda(\dot{x})\rangle&=& \langle\phi(uw), \lambda(\dot{x})\rangle
= u(\dot{x})\langle\phi(w), \lambda(\dot{x})\rangle\\
&=& v(\dot{x})\langle\phi(w), \lambda(\dot{x})\rangle
	= \langle\phi(v w), \lambda(\dot{x})\rangle\\
	&=& \langle\phi(v ), \lambda(\dot{x})\rangle.
\end{eqnarray*}
	Observe next that if 
	$ u \in A(H) $,  $ \dot{x} \in H $ and 
	$v \in A(H)\cap C_{c}(H) $ with
	$ v(\dot{x})=1, $
	then
	\begin{eqnarray*}
	\langle\phi(u), \lambda(\dot{x})\rangle&=&v(\dot{x})\langle\phi(u), \lambda(\dot{x})\rangle=\langle\phi(uv), \lambda(\dot{x})\rangle\\
	&=&u(\dot{x})\langle \phi(v), \lambda(\dot{x})\rangle=u(\dot{x})\tilde{\phi}(\dot{x}).
	\end{eqnarray*}
	This shows that $ \phi= T_{\tilde{\phi}}$.
	
	(ii). Since 
	$C_\delta(H)$ is a Banach 
	$ A(H) $-submodule of 
	$ VN(H),$
	it follows from Theorem \ref{2.2} that 
	$$B_{A(H)}(A(H),
	C_\delta(H)^*)
	= Q_{C_\delta(H)}^{*}. $$
	Let 
	$ f \in \ell^{1}(H)$ be with finite support.
	Then 
	$ f = uf \in Q_{C_\delta(H)}$,
	where 
	$ u \in A(H) $ with $ u|_{\text{supp} (f)}\equiv 1. $
	Consequently,
	$$\langle \phi, f \rangle= \langle \phi,u f \rangle=\langle \phi(u), f \rangle
	=\sum \phi(\dot{x}) f(\dot{x}),  $$
	for all 
	$ \phi \in B_{A(H)}({A(H)},
	C_\delta(H)^*). $
	Hence, there is an isometry between the dense subspace of $\overline{\ell^1(H)}^{\lVert\cdot\rVert_{M}}$
	and a dense subspace of 
	$ Q_{C_\delta(H)} $.
	Therefore, $ Q_{C_\delta(H)}= \overline{\ell^1(H)}^{\lVert\cdot\rVert_{M}}.$
	
	Since 
	$ A(H)\subseteq  C_\delta(H)^{*}$ and $A(H)$ is an ideal in $M(A(H))$,
	it follows that
	$ \phi u \in C_\delta(H)^{*} $
	for all
	$ \phi \in M(A(H)) $
	and 
	$ u \in A(H). $
	This implies  that 
	$ M(A(H)) \subseteq B_{A(H)}(A(H), C_\delta(H)^*)$. Furthermore, 
	$$\lVert\phi u\rVert_{ C_\delta(H)}
	\leq \lVert\phi u\rVert_{A(H)}
	\leq \lVert\phi \rVert_{M}  \lVert u\rVert_{A(H)}.$$
	Hence, the  inclusion map  is contractive.
\end{proof}

\section{Introverted subspaces of $ VN(H) $ and discreteness}
Let $ H $ be an ultraspherical hypergroup associated to  a locally compact group
$G$. The    Arens product on  $ VN(H)^* $ is defined as following three steps. For $ u, v$ in $A(H) $, $ T $ in $ VN(H) $ and  $ m,n \in VN(H)^*, $ we define 
$ u\cdot T $,
$ m\cdot T \in VN(H) $
and
$ m \odot n \in VN(H)^{*} $
as follows:
$$ \langle u \cdot T , v\rangle=  \langle T , uv\rangle,\quad \langle m \cdot T , u \rangle = \langle m , u \cdot T \rangle,\quad \langle m\odot n , T\rangle = \langle m , n \cdot T \rangle.$$
A linear subspace $ X $   of $ VN(H) $ is called topologically invariant if $ u\cdot X \subseteq X $ for all $ u \in A(H) $.  
The topologically invariant subspace $ X $ of $ VN(H) $  is called topologically introverted  if  $ m \cdot T \in X$ for all $ m \in X^* $ and $ T \in X $. In this case, $X^*$
is a Banach algebra  with the multiplication induced by the 
Arens product $\odot$ inherited from $VN(H)^{*}$.
Let $W(\widehat{H})$
be the set of all 
$ T $ in 
$VN (H) $
such that the map 
$ u\mapsto u \cdot T $
of 
$ A(H) $
into
$VN (H) $
is weakly compact. Let $UCB(\widehat{H})  $ denote the closed linear span of 
$$ \{u \cdot T : u \in A(H) , T \in VN(H)\} .$$
The elements in $ UCB(\widehat{H}) $ are called uniformly continuous functionals on $ A(H) $. We also recall that, subspaces $W(\widehat{H})$ and $UCB(\widehat{H})$ of $VN (H)$ are both topologically introverted.

\begin{proposition}\label{p8}
	Let $ H $ be an ultraspherical hypergroup. Then 
	$ C^{*}_{\lambda}(H) \subseteq W(\widehat{H}) $.
\end{proposition}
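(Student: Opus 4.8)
The plan is to reduce the statement to a single concrete weak-compactness check and then settle that check by factoring through the Hilbert space $L^2(H)$.

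First I would record that $W(\widehat{H})$ is norm closed in $VN(H)$. Since the module action is contractive, the assignment $T\mapsto\Phi_T$, where $\Phi_T\colon A(H)\to VN(H)$ is given by $\Phi_T(u)=u\cdot T$, is a linear contraction from $VN(H)$ into $B(A(H),VN(H))$; as the weakly compact operators form a closed subspace of $B(A(H),VN(H))$, the set $W(\widehat{H})$ is the preimage of a closed set under a continuous linear map, hence closed. Next, since $\lambda(C_c(H))$ is dense in $C^{*}_{\lambda}(H)$, it suffices to prove that $\lambda(f)\in W(\widehat{H})$ for every $f\in C_c(H)$; and because $W(\widehat{H})$ is a linear subspace and each $f\in C_c(H)$ is a linear combination of four nonnegative functions in $C_c(H)$ (its positive and negative real and imaginary parts), I may further assume $f\geq 0$.

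The heart of the matter is then to show that for $0\leq f\in C_c(H)$ the operator $\Phi_{\lambda(f)}$ factors through $L^2(H)$. By Remark \ref{remb} (applicable since $A(H)\subseteq B_\lambda(H)$) we have $u\cdot\lambda(f)=\lambda(uf)$ for all $u\in A(H)$, so $\Phi_{\lambda(f)}(u)=\lambda(uf)$. Writing $uf=f^{1/2}\cdot(f^{1/2}u)$, I would introduce the maps $S\colon A(H)\to L^2(H),\ u\mapsto f^{1/2}u$ and $\Psi\colon L^2(H)\to VN(H),\ g\mapsto\lambda(f^{1/2}g)$. The map $S$ is bounded because $\|f^{1/2}u\|_2^2=\int_H f|u|^2\,d\dot{x}\leq\|u\|_\infty^2\|f\|_1\leq\|u\|_{A(H)}^2\|f\|_1$, and $\Psi$ is bounded because $\|f^{1/2}g\|_1\leq\|f^{1/2}\|_2\|g\|_2=\|f\|_1^{1/2}\|g\|_2$ by Cauchy--Schwarz together with $\|\lambda(h)\|_{VN(H)}\leq\|h\|_1$. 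Since $\Psi(Su)=\lambda(fu)=\Phi_{\lambda(f)}(u)$, I obtain $\Phi_{\lambda(f)}=\Psi\circ S$, a composition of bounded operators passing through the reflexive space $L^2(H)$; hence $\Phi_{\lambda(f)}$ is weakly compact and $\lambda(f)\in W(\widehat{H})$. Combining this with the reductions of the first paragraph and the closedness of $W(\widehat{H})$ yields $C^{*}_{\lambda}(H)=\overline{\lambda(C_c(H))}\subseteq W(\widehat{H})$.

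I expect the main obstacle to be precisely the factorization step. The naive candidate $g\mapsto\lambda(g)$ is \emph{not} bounded from $L^2(H)$ into $VN(H)$, so one cannot route $\Phi_{\lambda(f)}$ through $L^2(H)$ directly; the device that makes the argument work is to split $f=f^{1/2}\cdot f^{1/2}$ and absorb one factor into each of $S$ and $\Psi$, which is exactly what renders both maps bounded. The remaining points require only routine care: the identity $u\cdot\lambda(f)=\lambda(uf)$ supplied by Remark \ref{remb}, the estimate $\|u\|_\infty\leq\|u\|_{A(H)}$, and the reduction to $f\geq 0$ so that $f^{1/2}$ is a well-defined element of $C_c(H)\subseteq L^2(H)$.
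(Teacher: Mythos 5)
Your proof is correct, but it takes a genuinely different route from the paper's. The paper fixes $f\in L^{1}(H)$, uses Remark \ref{remb} to extend the map $u\mapsto u\cdot\lambda(f)=\lambda(uf)$ from $A(H)$ to all of $B_{\lambda}(H)$, and shows this extension is continuous from the $\sigma(B_{\lambda}(H),C^{*}_{\lambda}(H))$-topology to the weak topology of $VN(H)$; since the unit ball of $B_{\lambda}(H)=C^{*}_{\lambda}(H)^{*}$ is weak$^{*}$-compact by Banach--Alaoglu, its image $\{\phi\cdot\lambda(f):\|\phi\|\leq 1\}$ is relatively weakly compact, and restricting to $A(H)\subseteq B_{\lambda}(H)$ gives $\lambda(f)\in W(\widehat{H})$ directly for every $f\in L^{1}(H)$, with no positivity reduction. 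You instead reduce to $0\leq f\in C_c(H)$ and factor $\Phi_{\lambda(f)}=\Psi\circ S$ through the reflexive space $L^{2}(H)$ via the splitting $uf=f^{1/2}(f^{1/2}u)$, using Cauchy--Schwarz and $\|u\|_{\infty}\leq\|u\|_{A(H)}$, $\|\lambda(h)\|\leq\|h\|_{1}$. Both arguments hinge on the same identity $u\cdot\lambda(f)=\lambda(uf)$, but they trade different assets: the paper's duality argument is shorter once one accepts $C^{*}_{\lambda}(H)^{*}=B_{\lambda}(H)$ and handles general $f\in L^{1}(H)$ in one stroke, while yours is more elementary and self-contained, explicitly supplies the norm-closedness of $W(\widehat{H})$ (which the paper uses tacitly in its opening reduction), and yields the formally stronger conclusion that each $\Phi_{\lambda(f)}$ factors through a Hilbert space, which implies weak compactness by reflexivity. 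All the individual steps you give (closedness of the weakly compact operators, contractivity of the module action, the density and positivity reductions, and the two norm estimates) check out.
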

\begin{proof}
	It suffices to prove that if
	$ f \in L^{1}(H) $,
	then 
	$ \lambda(f) \in W(\widehat{H}) $.
	Let 
	$ f \in L^{1}(H) $
	be fixed.
	Then by Remark \ref{remb}, for each
	$\phi\in B_{\lambda}(H) $, we have $ \phi \cdot \lambda(f)= \lambda(\phi f)$.
	Consider the map
	$ \phi  \mapsto \lambda (\phi  f) $
	from 
	$ B_{\lambda}(H) $
	into 
	$ VN(H) $.
	This map is continuous
	when
	$ B_{\lambda}(H) $
	has the
	$ \sigma(B_{\lambda}(H),C^{*}_{\lambda}(H))$-topology and
	$ VN(H) $
	has the weak topology.
	Indeed, let $ \Psi \in VN(H)^{*} $ and
	$ (\phi_{\alpha})\subseteq B_{\lambda}(H) $
	be a net  such that 
	$ \langle\phi_{\alpha}, T\rangle\rightarrow \langle\phi, T\rangle $
	for all 
	$ T\in C^{*}_{\lambda}(H) $.
	Then the restriction of 
	$  \Psi $ to
	$ C^{*}_{\lambda}(H) $
	is in $C^{*}_{\lambda}(H)^{*}=B_{\lambda}(H)$. Thus, there exists 
	$\psi \in B_{\lambda}(H) $
	such that
	$$ \langle\Psi , \lambda(h)\rangle =  \langle\psi , \lambda(h)\rangle
	=\int h(\dot{x})\psi (\dot{x}) d\dot{x} \quad  (h\in L^{1}(H)).
	$$
	Hence,
	\begin{align*}
	\langle \Psi, \lambda(\phi_{\alpha}f)\rangle
	& =\langle \psi, \lambda(\phi_{\alpha}f)\rangle
	=\int \phi_{\alpha}(\dot{x}) f(\dot{x})\psi(\dot{x}) d\dot{x}\\
	&=\langle\phi_{\alpha},\lambda(\psi f) \rangle
	\rightarrow
	\langle\phi, \lambda(\psi f)\rangle 
	\\
	& =\langle\psi, \lambda(\phi f)\rangle
	=\langle\Psi, \lambda(\phi f)\rangle.
	\end{align*}
	It follows that the set
	$ \{ \phi \cdot \lambda(f): \phi \in B_{\lambda}(H), \lVert\phi\rVert\leq1 \}$ is relatively compact in the weak topology of 
	$ VN(H)$.
	The rest of the proof follows from the fact that
	$ A(H)\subseteq B_{\lambda}(H)$.  
\end{proof}

\begin{proposition}\label{p10}
	Let $ H $ be an ultraspherical hypergroup. Then
	$C^{*}_{\lambda}(H)\subseteq UCB(\widehat{H}) $.
\end{proposition}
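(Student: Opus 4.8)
The plan is to reduce everything to a single density argument, following the pattern of the classical group case. Since $UCB(\widehat{H})$ is by definition a norm-closed subspace of $VN(H)$, and $C^*_\lambda(H)$ is the norm closure of $\lambda(L^1(H))$, it suffices to prove that $\lambda(f)\in UCB(\widehat{H})$ for every $f\in L^1(H)$. Furthermore, because $C_c(H)$ is dense in $L^1(H)$ and $\|\lambda(g)\|\le\|g\|_1$, it is enough to establish this for every compactly supported $f$.

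First I would recall from Remark \ref{remb} that for $u\in A(H)\subseteq B_\lambda(H)$ and $f\in L^1(H)$ the module action satisfies $u\cdot\lambda(f)=\lambda(uf)$, where $uf$ denotes the pointwise product. One checks that this $B_\lambda(H)$-action restricts to the $A(H)$-action $\langle u\cdot T,v\rangle=\langle T,uv\rangle$ on $VN(H)$ that is used to define $UCB(\widehat{H})$. The key step is then to take $f\in C_c(H)$ and invoke the regularity of $A(H)$ (recorded after \cite[Theorem 3.13]{murg2}) to produce $u\in A(H)$ with $u\equiv 1$ on $\mathrm{supp}(f)$. Then $uf=f$, so
$$
\lambda(f)=\lambda(uf)=u\cdot\lambda(f)\in\{u\cdot T:u\in A(H),\,T\in VN(H)\}\subseteq UCB(\widehat{H}).
$$

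Finally I would run the two approximation steps. Given an arbitrary $f\in L^1(H)$, choose $f_n\in C_c(H)$ with $\|f_n-f\|_1\to 0$; then $\|\lambda(f_n)-\lambda(f)\|\le\|f_n-f\|_1\to 0$, and since each $\lambda(f_n)\in UCB(\widehat{H})$ and $UCB(\widehat{H})$ is norm closed, $\lambda(f)\in UCB(\widehat{H})$. Letting $f$ range over $L^1(H)$ and passing to the norm closure yields $C^*_\lambda(H)\subseteq UCB(\widehat{H})$. I do not expect a serious obstacle; the only points that require care are verifying that the $B_\lambda(H)$-action of Remark \ref{remb} genuinely restricts to the $A(H)$-action appearing in the definition of $UCB(\widehat{H})$, and confirming that the regularity of $A(H)$ supplies the required bump function $u$ in the hypergroup setting.
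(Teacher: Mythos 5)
Your proposal is correct and follows essentially the same route as the paper: regularity of $A(H)$ produces a bump function $u\equiv 1$ on $\mathrm{supp}(f)$ so that $u\cdot\lambda(f)=\lambda(uf)=\lambda(f)\in UCB(\widehat{H})$ for $f\in C_c(H)$, followed by a density argument. The paper verifies $u\cdot\lambda(f)=\lambda(f)$ by the direct pairing computation against $v\in A(H)$ rather than citing Remark \ref{remb}, but this is the same calculation, and your two-step density reduction (through $L^1(H)$) is an immaterial refinement of the paper's one-step appeal to density of $C_c(H)$ in $C^*_\lambda(H)$.
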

\begin{proof}
	Let $ f \in C_{c}(H)$.  By regularity of $ A(H)$,
	there exists $ u\in A(H) $ such that $ u|_{\text{supp}(f)}\equiv1 $. Therefore,
	\begin{eqnarray*}
	\langle u \cdot\lambda(f), v\rangle 
	=\langle\lambda(f),u v\rangle
	&=&\int f(\dot{x}) u(\dot{x}) v(\dot{x})d\dot{x}\\
	&=&\int f(\dot{x}) v(\dot{x})dt\\
	&=& \langle\lambda(f), v\rangle
	\end{eqnarray*}
	for all $ v \in A(H)$. This implies that 
	$ u \cdot \lambda(f)  =\lambda(f)$.
	Hence, 
	$ \lambda(f) \in UCB(\widehat{H}).$
	Consequently, $  C^*_\lambda(H)\subseteq UCB(\widehat{H}) $ by the density of $C_{c}(H) $
	in $  C^{*}_{\lambda}(H).$
\end{proof}

Let $ X $ be a closed topologically invariant subspace of
$VN(H)$
containing
$\lambda(\dot{e}).$
Then
$ m\in X^{*} $
is called a topologically  invariant mean 
on $X$
if:

(i) \ $\lVert  m \rVert =\langle m,     \lambda({\dot{e}}) \rangle  =1;$

(ii) \ $ \langle m, u \cdot T\rangle = u(\dot{e}) \langle m,T \rangle $ 
for all
$ T\in X  $
and
$ u \in A(H).$

We denote by $TIM(X)$ the set of all topologically invariant means on $X$.  
We also recall from Remark \ref{remb} that the space  $  C^*_\lambda(H)$ is an $ A(H)$-submodule of $ VN(H)$.
The following proposition is a consequence of 
\cite[Proposition 5.7]{dl} and
\cite[ Proposition 6.3]{lau1987}
and the fact that 
$ A(H)$  is a commutative $F$-algebra.

\begin{proposition}\label{pw}
	Let $ H $ be an ultraspherical hypergroup. Then the following hold.
	
	{\rm(i)}  The space
	$C^{*}_{\lambda}(H) $
	is a topologically introverted subspace of 
	$ VN(H) $.
	
	{\rm(ii)} $ W(\widehat{H}) $ admits a unique topologically invariant mean.
\end{proposition}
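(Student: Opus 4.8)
The plan is to reduce both assertions to the general theory of commutative $F$-algebras, once I have checked that $A(H)$ belongs to that class. Recall that a commutative $F$-algebra is a commutative Banach algebra whose dual is a von Neumann algebra for which the identity is a character of the algebra. Here $A(H)^*=VN(H)$ is a von Neumann algebra with identity $\lambda(\dot{e})$, and for $u,v\in A(H)$ we have $\langle\lambda(\dot{e}),uv\rangle=(uv)(\dot{e})=u(\dot{e})v(\dot{e})=\langle\lambda(\dot{e}),u\rangle\langle\lambda(\dot{e}),v\rangle$, so $\lambda(\dot{e})$ is multiplicative on $A(H)$. Hence $A(H)$ is a commutative $F$-algebra, which is exactly the hypothesis required to invoke \cite[Proposition 5.7]{dl} and \cite[Proposition 6.3]{lau1987}.

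For part (i) I would argue directly from Remark \ref{remb}. The subspace $C^{*}_{\lambda}(H)$ is topologically invariant because $A(H)\subseteq B_{\lambda}(H)$ and, by Remark \ref{remb}, $C^{*}_{\lambda}(H)$ is a $B_{\lambda}(H)$-bimodule; thus $u\cdot T\in C^{*}_{\lambda}(H)$ for every $u\in A(H)$ and $T\in C^{*}_{\lambda}(H)$. For introversion I must show $m\cdot T\in C^{*}_{\lambda}(H)$ for all $m\in C^{*}_{\lambda}(H)^{*}=B_{\lambda}(H)$ and $T\in C^{*}_{\lambda}(H)$. Since the action $m\cdot T$ coming from the Arens construction restricts on $C^{*}_{\lambda}(H)$ to the $B_{\lambda}(H)$-module action of Remark \ref{remb}, and since that remark gives $\phi\cdot\lambda(f)=\lambda(\phi f)\in\lambda(L^{1}(H))$, a density argument using that $\lambda(L^{1}(H))$ is norm-dense in $C^{*}_{\lambda}(H)$ yields $m\cdot T\in C^{*}_{\lambda}(H)$. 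This is precisely the content of \cite[Proposition 5.7]{dl} specialized to the reduced $C^{*}$-algebra of the $F$-algebra $A(H)$.

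For part (ii), existence of a topologically invariant mean on $W(\widehat{H})$ is comparatively routine: $\lambda(\dot{e})\in C^{*}_{\lambda}(H)\subseteq W(\widehat{H})$, the set of means satisfying conditions (i) and (ii) of the definition is weak-$*$ compact, and a standard averaging argument produces such a mean. The substance lies in uniqueness, and here I would appeal to \cite[Proposition 6.3]{lau1987}, which asserts that for a commutative $F$-algebra the space of weakly almost periodic functionals carries a unique topologically invariant mean. The engine behind that statement is that weak compactness of the orbit maps $u\mapsto u\cdot T$ on $W(\widehat{H})$ allows a Ryll-Nardzewski type fixed-point argument to collapse the (weak-$*$ compact, convex) set of invariant means to a single point.

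The main obstacle is the uniqueness assertion in (ii). The introversion in (i) and the mere existence of an invariant mean are essentially bookkeeping built on Remark \ref{remb} and weak-$*$ compactness, whereas uniqueness genuinely exploits the weak almost periodicity of the elements of $W(\widehat{H})$ together with the $F$-algebra structure; this is the step where the external result \cite[Proposition 6.3]{lau1987} does the real work, and I would not expect a short self-contained replacement for it.
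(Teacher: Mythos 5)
Your proposal is correct and follows essentially the same route as the paper, which offers no written proof at all but simply notes that the proposition is a consequence of \cite[Proposition 5.7]{dl}, \cite[Proposition 6.3]{lau1987}, and the fact that $A(H)$ is a commutative $F$-algebra. Your verification that $A(H)$ is a commutative $F$-algebra (via $\lambda(\dot{e})$ being multiplicative on $A(H)$) and your fleshing-out of the module-action details from Remark \ref{remb} are accurate additions to the same argument.
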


\begin{corollary}
	Let $ H $ be an ultraspherical hypergroup. Then $ H $ is discrete if and only if 
	$ \lambda(\dot{e})\in C^{*}_{\lambda}(H).$
\end{corollary}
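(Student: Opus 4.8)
The plan is to prove the two implications separately: the forward one by a direct computation, and the converse by exploiting the unique topologically invariant mean supplied by Proposition \ref{pw}(ii).

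First I would dispose of the easy direction. Suppose $H$ is discrete. Then $\{\dot{e}\}$ is open and its Haar measure $w:=\int_H \mathbf{1}_{\{\dot{e}\}}\,d\dot{x}$ is strictly positive and finite, so $\mathbf{1}_{\{\dot{e}\}}\in C_c(H)\subseteq L^1(H)$. Writing $\lambda$ as a Bochner integral, $\lambda(\mathbf{1}_{\{\dot{e}\}})=\int_H \mathbf{1}_{\{\dot{e}\}}(\dot{x})\,\lambda(\dot{x})\,d\dot{x}=w\,\lambda(\dot{e})$, whence $\lambda(\dot{e})=w^{-1}\lambda(\mathbf{1}_{\{\dot{e}\}})\in\lambda(L^1(H))\subseteq C^*_\lambda(H)$.

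For the converse, assume $\lambda(\dot{e})\in C^*_\lambda(H)$. The first observation is that $\lambda(\dot{e})$ always lies in $W(\widehat{H})$, since $\langle u\cdot\lambda(\dot{e}),v\rangle=\langle\lambda(\dot{e}),uv\rangle=u(\dot{e})v(\dot{e})$ shows that $u\mapsto u\cdot\lambda(\dot{e})=u(\dot{e})\lambda(\dot{e})$ has rank one and is therefore weakly compact. Let $m$ be the unique topologically invariant mean on $W(\widehat{H})$ furnished by Proposition \ref{pw}(ii), and set $\phi:=m|_{C^*_\lambda(H)}\in C^*_\lambda(H)^*=B_\lambda(H)$. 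It is exactly here that the hypothesis enters: because $\lambda(\dot{e})\in C^*_\lambda(H)$, we have $\phi(\dot{e})=\langle m,\lambda(\dot{e})\rangle=1$, so $\phi$ is a \emph{nonzero} continuous function on $H$. (Without the hypothesis the restriction $\phi$ could vanish on $C^*_\lambda(H)$; recognizing that the argument collapses in that case is the point on which the whole proof turns, since otherwise one would absurdly conclude that every $H$ is discrete.) Next I would use invariance: for $u\in A(H)$ and $f\in L^1(H)$ one has $u\cdot\lambda(f)=\lambda(uf)\in C^*_\lambda(H)\subseteq W(\widehat{H})$ by Remark \ref{remb}, so the defining property of $m$ gives $\int_H u(\dot{x})f(\dot{x})\phi(\dot{x})\,d\dot{x}=u(\dot{e})\int_H f(\dot{x})\phi(\dot{x})\,d\dot{x}$ for all $u\in A(H)$.

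Finally I would read off discreteness. Fixing $f\ge 0$ in $C_c(H)$ supported on a neighbourhood of $\dot{e}$ where $|\phi|>\tfrac12$ (possible since $\phi$ is continuous with $\phi(\dot{e})=1$), the constant $c:=\int_H f\phi\,d\dot{x}$ is nonzero, and the displayed identity says that the absolutely continuous measure $\nu:=f\phi\,d\dot{x}$ satisfies $\int_H u\,d\nu=c\,u(\dot{e})=\int_H u\,d(c\,\delta_{\dot{e}})$ for every $u\in A(H)$. Since $A(H)$ is a conjugation-closed subalgebra of $C_0(H)$ that separates points and vanishes nowhere, it is uniformly dense in $C_0(H)$ by the Stone--Weierstrass theorem, so $\nu=c\,\delta_{\dot{e}}$. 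But $\nu$ is absolutely continuous with respect to Haar measure while $c\,\delta_{\dot{e}}$ charges $\dot{e}$ with mass $c\neq0$; evaluating both at $\{\dot{e}\}$ forces $\int_{\{\dot{e}\}}f\phi\,d\dot{x}=c\neq0$, so $\{\dot{e}\}$ has strictly positive Haar measure. Since an atom of the Haar measure is well known to force a locally compact hypergroup to be discrete, $H$ is discrete. I expect the genuine difficulties to be concentrated in this last direction, namely in pinning down that the hypothesis is used precisely to guarantee $\phi(\dot{e})=1\neq0$, and in the measure-theoretic passage converting the functional identity into the equality $\nu=c\,\delta_{\dot{e}}$ and hence into an atom of the Haar measure.
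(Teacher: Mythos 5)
Your proposal is essentially correct, and on the hard direction it takes a genuinely more self-contained route than the paper. Both arguments share the same skeleton: the unique topologically invariant mean $m$ on $W(\widehat{H})$ from Proposition \ref{pw}(ii) satisfies $\langle m,\lambda(\dot{e})\rangle=1$, and the hypothesis $\lambda(\dot{e})\in C^{*}_{\lambda}(H)$ lets one test $m$ against $C^{*}_{\lambda}(H)$. At that point the paper simply invokes \cite[Theorem 4.4(iv)]{kumar1} to conclude discreteness, whereas you unpack what such a citation proves: restricting $m$ to $C^{*}_{\lambda}(H)\subseteq W(\widehat{H})$ (Proposition \ref{p8}) gives $\phi\in B_{\lambda}(H)$, topological invariance together with Remark \ref{remb} shows that the measure $f\phi\,d\dot{x}$ is a nonzero multiple of $\delta_{\dot{e}}$, and hence Haar measure has an atom at $\dot{e}$. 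The paper's route buys brevity; yours buys independence from Kumar's theorem, at the cost of importing instead the standard hypergroup fact (Jewett, Bloom--Heyer) that an atom of the Haar measure at $\dot{e}$ forces discreteness. The forward directions are the same.

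Two steps need repair, though both are fixable. (a) You assert $\phi(\dot{e})=\langle m,\lambda(\dot{e})\rangle$, silently identifying a dual-pairing value with a function value. Since the function realizing $\phi\in B_{\lambda}(H)=C^{*}_{\lambda}(H)^{*}$ is determined only by $\langle\phi,\lambda(g)\rangle=\int g\phi\,d\dot{x}$ for $g\in L^{1}(H)$, and $\lambda(\dot{x})$ is in general not an element of $C^{*}_{\lambda}(H)$, this identity is not definitional. It is true and provable: take $u\in A(H)$ with $u(\dot{e})=1$, note $u\cdot\lambda(\dot{e})=\lambda(\dot{e})$ and $\langle\phi,u\cdot T\rangle=\langle\phi u,T\rangle$ for all $T\in C^{*}_{\lambda}(H)$ (both sides are norm-continuous in $T$ and agree on $\lambda(L^{1}(H))$), so $\langle\phi,\lambda(\dot{e})\rangle=\langle\phi u,\lambda(\dot{e})\rangle=(\phi u)(\dot{e})=\phi(\dot{e})$ since $\phi u\in A(H)$. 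But the cleaner move is to avoid function values at $\dot{e}$ altogether: $\langle\phi,\lambda(\dot{e})\rangle=1$ already shows $\phi\neq 0$ as a functional, so by density of $\lambda(L^{1}(H))$ in $C^{*}_{\lambda}(H)$ there exists $f\in L^{1}(H)$ with $c:=\int f\phi\,d\dot{x}\neq 0$; your invariance identity then gives $\int uf\phi\,d\dot{x}=c\,u(\dot{e})$ for all $u\in A(H)$, and the rest of your argument (Stone--Weierstrass, absolute continuity, atom) runs verbatim. (b) Choosing $f\ge 0$ supported where $|\phi|>\tfrac12$ does not guarantee $\int f\phi\,d\dot{x}\neq 0$, because $\phi$ is complex-valued and its values can cancel; use continuity at $\dot{e}$ to get $|\phi-1|<\tfrac12$, hence $\mathrm{Re}\,\phi>\tfrac12$, on a neighbourhood of $\dot{e}$ (or simply use the bypass in (a)).
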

\begin{proof}
	If $H$ is discrete, then $\ell^1(H)=L^1(H)$.
	Therefore,
	$\lambda(\dot{e}) \in C^{*}_{\lambda}(H).$
	Conversely, assume that
	$\lambda(\dot{e}) \in C^{*}_{\lambda}(H),$
	and 
	$m$
	denote the unique topologically invariant mean on 
	$W(\widehat{H})$. 
	Then $ \langle m, \lambda(\dot{e}) \rangle =1 $.
	It follows that 
	$ H $
	must be discrete by \cite[ Theorem 4.4(iv)]{kumar1}.
\end{proof}
\begin{lemma}\label{l1}
	Let $ H $ be an ultraspherical hypergroup and let $ R: VN(H)^*\rightarrow UCB(\widehat{H})^* $ be the restriction map. Then
	$ R: TIM(VN(H))\rightarrow TIM(UCB(\widehat{H})) $
	is a bijection.
\end{lemma}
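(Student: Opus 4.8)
The plan is to prove the three assertions in turn: that $R$ carries $TIM(VN(H))$ into $TIM(UCB(\widehat{H}))$, that it is injective there, and that it is surjective. The technical engine behind all three is the single identity
$$
\langle m, T\rangle = \langle m, u\cdot T\rangle \qquad (T\in VN(H)),
$$
valid for any $m\in TIM(VN(H))$ and any $u\in A(H)$ with $u(\dot{e})=1$; this is immediate from condition (ii) in the definition of a topologically invariant mean. Such a $u$ exists because $A(H)$ is regular and Tauberian with $\Delta(A(H))=H$. Two elementary facts will be used repeatedly: that $u\cdot T\in UCB(\widehat{H})$ for \emph{every} $T\in VN(H)$, since $u\cdot T$ is by definition a generator of $UCB(\widehat{H})$; and that the module action is associative, $(uv)\cdot T=u\cdot(v\cdot T)$, which follows at once from the defining formulas for the Arens products. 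I also record that $\lambda(\dot{e})\in UCB(\widehat{H})$, since $u\cdot\lambda(\dot{e})=u(\dot{e})\lambda(\dot{e})$ for all $u\in A(H)$, so choosing $u(\dot{e})=1$ exhibits $\lambda(\dot{e})$ as a generator of $UCB(\widehat{H})$.

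First I would check that $R$ is well defined into $TIM(UCB(\widehat{H}))$. For $m\in TIM(VN(H))$ the restriction $R(m)$ satisfies $\|R(m)\|\le\|m\|=1$ and $\langle R(m),\lambda(\dot{e})\rangle=\langle m,\lambda(\dot{e})\rangle=1$; since $\|\lambda(\dot{e})\|=1$ this forces $\|R(m)\|=1$, giving condition (i). Condition (ii) is inherited directly: for $T\in UCB(\widehat{H})$ and $u\in A(H)$ we have $u\cdot T\in UCB(\widehat{H})$, so $\langle R(m),u\cdot T\rangle=\langle m,u\cdot T\rangle=u(\dot{e})\langle m,T\rangle=u(\dot{e})\langle R(m),T\rangle$. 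Injectivity is then an immediate consequence of the displayed identity: fixing $u\in A(H)$ with $u(\dot{e})=1$, every $T\in VN(H)$ satisfies $\langle m,T\rangle=\langle m,u\cdot T\rangle=\langle R(m),u\cdot T\rangle$ because $u\cdot T\in UCB(\widehat{H})$; hence $m$ is completely determined by $R(m)$.

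The substance of the argument is surjectivity. Given $n\in TIM(UCB(\widehat{H}))$, I would define $m$ on $VN(H)$ by $\langle m,T\rangle:=\langle n,u\cdot T\rangle$ for a fixed $u\in A(H)$ with $u(\dot{e})=1$, and then show this does not depend on the choice of $u$. If $v\in A(H)$ also has $v(\dot{e})=1$, then using $v\cdot(u\cdot T)=(vu)\cdot T=u\cdot(v\cdot T)$ together with the mean property of $n$ on $UCB(\widehat{H})$ gives
$$
\langle n,u\cdot T\rangle=\langle n,v\cdot(u\cdot T)\rangle=\langle n,u\cdot(v\cdot T)\rangle=\langle n,v\cdot T\rangle,
$$
so $m$ is well defined, linear, and $\|m\|\le\|n\|=1$. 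That $R(m)=n$ follows from $\langle m,T\rangle=\langle n,u\cdot T\rangle=u(\dot{e})\langle n,T\rangle=\langle n,T\rangle$ for $T\in UCB(\widehat{H})$. Finally $m\in TIM(VN(H))$: condition (i) holds because $\langle m,\lambda(\dot{e})\rangle=\langle n,u\cdot\lambda(\dot{e})\rangle=u(\dot{e})\langle n,\lambda(\dot{e})\rangle=1$ while $\|m\|\le1$; and condition (ii) follows by writing $\langle m,w\cdot T\rangle=\langle n,(uw)\cdot T\rangle=\langle n,w\cdot(u\cdot T)\rangle=w(\dot{e})\langle n,u\cdot T\rangle=w(\dot{e})\langle m,T\rangle$, where the crucial middle step applies the mean property of $n$ to the element $u\cdot T\in UCB(\widehat{H})$.

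I do not expect a genuine obstacle here; the proof is a transcription of the classical group-case argument to the ultraspherical setting, and the points requiring care are bookkeeping ones. The key subtlety is that condition (ii) for $m$ must be reduced, via the associativity identity $(uw)\cdot T=w\cdot(u\cdot T)$, to an instance of the mean property of $n$ on a \emph{generator} of $UCB(\widehat{H})$, rather than being applied naively; and checking that the candidate inverse is independent of the auxiliary function $u$ is the one place where both associativity and the invariance of $n$ are genuinely needed.
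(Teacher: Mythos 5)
Your overall strategy is the same as the paper's: injectivity follows from invariance via $\langle m,T\rangle=\langle m,u\cdot T\rangle=\langle R(m),u\cdot T\rangle$, and surjectivity is attacked by defining the candidate inverse $\langle m,T\rangle:=\langle n,u\cdot T\rangle$ for an auxiliary $u\in A(H)$ with $u(\dot{e})=1$. Your verifications of $R(m)=n$ and of condition (ii) for $m$ are exactly the paper's computations; your independence-of-$u$ check and your explicit observations that $\lambda(\dot{e})\in UCB(\widehat{H})$ and that $R$ really lands in $TIM(UCB(\widehat{H}))$ are extra care that the paper leaves implicit, and they are correct.

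There is, however, one genuine gap, in the norm estimate for the constructed mean. You assert $\lVert m\rVert\le\lVert n\rVert=1$, but from $\langle m,T\rangle=\langle n,u\cdot T\rangle$ all that follows is $\lVert m\rVert\le\lVert n\rVert\,\lVert u\rVert_{A(H)}$, and your hypotheses on $u$ (existence via regularity, $u(\dot{e})=1$) give no control on $\lVert u\rVert_{A(H)}$. Indeed, since evaluation at $\dot{e}$ is a character of $A(H)$, every such $u$ satisfies $\lVert u\rVert_{A(H)}\ge 1$, possibly strictly; even your independence-of-choice argument only yields $\lVert m\rVert\le\inf\{\lVert u\rVert_{A(H)}:u(\dot{e})=1\}$, and showing that this infimum equals $1$ is precisely what is missing. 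Since condition (i) of the definition of a topologically invariant mean requires $\lVert m\rVert=1$, without this bound you have not shown $m\in TIM(VN(H))$, so surjectivity is not established. The paper closes exactly this point by choosing $u$ with $\lVert u\rVert_{A(H)}=u(\dot{e})=1$, whose existence in the ultraspherical hypergroup setting is a nontrivial cited fact (\cite[Proposition 3.4]{kumar1}), not a consequence of regularity and Tauberianness alone; with that normalization $\lVert m\rVert\le 1$ is immediate and the rest of your argument goes through verbatim.
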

\begin{proof}
	If 
	$ m_{1},m_{2} \in  TIM(VN(H)) $
	with
	$  m_{1}\neq m_{2}, $
	then
	there exists 
	$ T \in VN(H) $
	such that
	$ \langle m_{1},  T\rangle\neq \langle m_{2},  T\rangle. $
	Given $ u \in A(H)$ with $u(\dot{e})=1 $, we have
	$$ \langle m_{1},u\cdot  T\rangle = \langle m_{1},  T\rangle \neq \langle m_{2},  T\rangle = \langle m_{2},u\cdot  T\rangle. $$
	This implies that
	$ R( m_{1})\neq R( m_{2}), $
	and hence $ R $
	is injective.
	
	Suppose that
	$ \tilde{m}\in TIM(UCB(\widehat{H})).$
	Choose 
	$ u \in A(H) $
	with
	$ \lVert u\rVert_{A(H)} = u(\dot{e})=1$; see
	\cite[ Proposition 3.4]{kumar1}.
	Define 
	$ m $
	on
	$ VN(H)^{*} $
	by
	$$\langle m, T\rangle=\langle\tilde{m}, u\cdot T \rangle\quad (T\in VN(H)).$$
	Since
	$ \lVert u\rVert_{A(H)}=1,$
	it follows that
	$ \lVert m\rVert\leq 1. $
	Moreover,
	$$\langle m, \lambda(\dot{e})\rangle
	=\langle \tilde{m} , u \cdot \lambda(\dot{e})\rangle 
	=u(\dot{e})\langle \tilde{m} , \lambda(\dot{e})\rangle
	=\langle \tilde{m} , \lambda(\dot{e})\rangle =1. $$
	Therefore,
	$ \lVert m\rVert=1$. Furthermore, for each $v \in A(H)$ and $T\in VN(H),$ we have
	\begin{eqnarray*}
	\langle m, v \cdot T\rangle
	&=&\langle\tilde{m}, u\cdot(v \cdot T)\rangle
	=\langle\tilde{m},v\cdot(u \cdot T)\rangle\\
	&=&v(\dot{e})\langle\tilde{m}, u\cdot T\rangle
	=v(\dot{e})\langle m,  T\rangle.
	\end{eqnarray*}
	Consequently,
	$ m \in TIM(VN(H)). $
	Finally, if 
	$ T \in UCB(\widehat{H}),$
	then
	$$\langle R(m), T\rangle
	=\langle m, T\rangle
	=\langle\tilde{m}, u \cdot T\rangle
	=\langle\tilde{m}, T\rangle.$$
	Hence, $ R $ is surjective.
\end{proof}

\begin{proposition}
	Let $ H $ be an ultraspherical hypergroup. Then the following
	are equivalent.
	
	{\rm(i)} $ H $ is discrete.
	
	{\rm(ii)} $ UCB(\widehat{H}) =  C^{*}_{\lambda}(H)$.
	
	{\rm(iii)} There is a unique topologically invariant mean on $UCB(\widehat{H})$.
\end{proposition}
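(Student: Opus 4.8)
The plan is to establish the three equivalences through the chain (i) $\Rightarrow$ (ii) $\Rightarrow$ (iii) $\Rightarrow$ (i), leaning heavily on the structural results already assembled in this section. For (i) $\Rightarrow$ (ii), I would first recall from Proposition \ref{p10} that $C^*_\lambda(H) \subseteq UCB(\widehat{H})$ always holds, so only the reverse inclusion requires discreteness. Assuming $H$ is discrete, we have $\ell^1(H) = L^1(H)$ and in particular $\lambda(\dot{e}) \in C^*_\lambda(H)$. Since $UCB(\widehat{H})$ is the closed linear span of $\{u \cdot T : u \in A(H), T \in VN(H)\}$, it suffices to show each such $u \cdot T$ lies in $C^*_\lambda(H)$; I would exploit that in the discrete setting $\lambda(\dot{e})$ acts (after the module action of a suitable $u \in A(H)$ with $u(\dot{e})=1$) as an identity-like element, so that any $u \cdot T$ can be recaptured inside the reduced $C^*$-algebra via the pointwise/convolution identities developed in Remark \ref{remb} and Proposition \ref{p10}.

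For (ii) $\Rightarrow$ (iii), the strategy is to transport the uniqueness of the topologically invariant mean. By Proposition \ref{pw}(i), $C^*_\lambda(H)$ is topologically introverted, and by Proposition \ref{pw}(ii), $W(\widehat{H})$ admits a \emph{unique} topologically invariant mean. Under hypothesis (ii) we may substitute $UCB(\widehat{H}) = C^*_\lambda(H)$, and I would invoke Proposition \ref{p8} (giving $C^*_\lambda(H) \subseteq W(\widehat{H})$) to compare means on the two spaces: a topologically invariant mean on $UCB(\widehat{H})$ restricts from, or extends to, one on $W(\widehat{H})$, and the established uniqueness on $W(\widehat{H})$ forces uniqueness on $UCB(\widehat{H})$.

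For (iii) $\Rightarrow$ (i), the key tool is Lemma \ref{l1}, which provides a bijection $R: TIM(VN(H)) \to TIM(UCB(\widehat{H}))$. If there is a unique topologically invariant mean on $UCB(\widehat{H})$, then by this bijection there is a unique topologically invariant mean $m$ on all of $VN(H)$. I would then argue, as in the proof of the Corollary above, that this unique mean $m$ must satisfy $\langle m, \lambda(\dot{e})\rangle = 1$ and that uniqueness together with the characterization cited from \cite[Theorem 4.4(iv)]{kumar1} forces $\lambda(\dot{e}) \in C^*_\lambda(H)$, whence $H$ is discrete by the preceding Corollary.

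The main obstacle I anticipate is the inclusion $UCB(\widehat{H}) \subseteq C^*_\lambda(H)$ in the implication (i) $\Rightarrow$ (ii): showing that every element $u \cdot T$ with $T \in VN(H)$ genuinely descends into the reduced $C^*$-algebra when $H$ is discrete. The module actions are well understood on the dense subspace $\lambda(L^1(H))$, but controlling a general $T \in VN(H)$ under the action of $A(H)$ and verifying that the result stays in the norm closure of $\lambda(\ell^1(H)) = \lambda(L^1(H))$ will require a careful density and approximation argument, using the discreteness to identify $\lambda(\dot{e})$ as a unit and the introversion of $C^*_\lambda(H)$ to keep the products inside the algebra.
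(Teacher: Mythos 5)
Your middle implication (ii) $\Rightarrow$ (iii) is sound and uses essentially the paper's ingredients (Proposition \ref{p8}, Proposition \ref{pw}(ii), and Lemma \ref{l1}), but both of the other implications have genuine gaps. In (i) $\Rightarrow$ (ii) you correctly reduce to showing $u\cdot T\in C^{*}_{\lambda}(H)$ for all $u\in A(H)$ and $T\in VN(H)$, but the mechanism you propose cannot deliver this. The identities from Remark \ref{remb} and Proposition \ref{p10} only control the module action on elements of $\lambda(L^{1}(H))$, i.e.\ on elements already inside $C^{*}_{\lambda}(H)$; they say nothing about $u\cdot T$ for a general $T\in VN(H)$. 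Likewise, knowing that $\lambda(\dot{e})\in C^{*}_{\lambda}(H)$ (so that $C^{*}_{\lambda}(H)$ is unital) does not make the $A(H)$-action carry $VN(H)$ into $C^{*}_{\lambda}(H)$: the module action is not implemented by operator multiplication, so a unit is of no help, and introversion concerns the action of $B_{\lambda}(H)=C^{*}_{\lambda}(H)^{*}$ on $C^{*}_{\lambda}(H)$, not the action of $A(H)$ on $VN(H)$. The step you flag as your ``main obstacle'' is exactly the content of the implication, and the resolution is an elementary computation you are missing: when $H$ is discrete each characteristic function ${1}_{\dot{x}}$ belongs to $A(H)$, and for every $v\in A(H)$ one has $\langle {1}_{\dot{x}}\cdot T, v\rangle=\langle T, v{1}_{\dot{x}}\rangle=v(\dot{x})\langle T,{1}_{\dot{x}}\rangle$, so that ${1}_{\dot{x}}\cdot T=\langle T,{1}_{\dot{x}}\rangle\lambda(\dot{x})\in C^{*}_{\lambda}(H)$. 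Since compactly supported elements of $A(H)$ have finite support and are dense in $A(H)$, linearity and norm-density give $u\cdot T\in C^{*}_{\lambda}(H)$ for all $u\in A(H)$, which together with Proposition \ref{p10} yields (ii).

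In (iii) $\Rightarrow$ (i), passing from a unique mean on $UCB(\widehat{H})$ to a unique topologically invariant mean on $VN(H)$ via the bijection of Lemma \ref{l1} is correct and matches the paper. But your next step --- that uniqueness ``forces $\lambda(\dot{e})\in C^{*}_{\lambda}(H)$'' via \cite[Theorem 4.4(iv)]{kumar1} --- is not an argument: you give no reason why uniqueness of the mean should place $\lambda(\dot{e})$ inside $C^{*}_{\lambda}(H)$, and by the preceding Corollary that membership is \emph{equivalent} to discreteness, so you are assuming what is to be proved. Note also that in the Corollary the citation of \cite{kumar1} runs in the opposite direction: there one already knows $\lambda(\dot{e})\in C^{*}_{\lambda}(H)$, hence has an invariant mean not vanishing on $C^{*}_{\lambda}(H)$, and concludes discreteness. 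The paper closes the implication (iii) $\Rightarrow$ (i) by citing \cite[Theorem 1.7]{kumar2}, which is precisely the statement that uniqueness of the topologically invariant mean on $VN(H)$ (equivalently, via Lemma \ref{l1}, on $UCB(\widehat{H})$) implies $H$ is discrete; without invoking that result, or reproving it, your chain of implications does not close.
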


\begin{proof}
	{\rm(i)}$ \Rightarrow ${\rm(ii)}. Assume that $ H $ is discrete. Then for each $\dot{x}\in H$, the 
	characteristic function 
	$ {1}_{\dot{x}}$ is in 
	$ A(H) $; see \cite[ Proposition 2.22]{murg1}.
	Let 
	$ T \in VN(H) $ be fixed.
	Then for each 
	$ v \in A(H)$, we get
	\begin{align*}
	\langle{1}_{\dot{x}}\cdot T, v \rangle
	=\langle T, v {1}_{\dot{x}}\rangle
	=\langle T, v(\dot{x}) {1}_{\dot{x}}\rangle
	=v(\dot{x})\langle T,  {1}_{\dot{x}}\rangle.
	\end{align*}
	Hence, ${1}_{\dot{x}}\cdot T=\langle T,  {1}_{\dot{x}}\rangle \lambda(\dot{x})\in C^{*}_{\lambda}(H)$.        
	Let $u\in A(H)$. Since $A(H)\cap C_c(H)$ is dense in $A(H)$, we
	can suppose that $u$ has compact and hence finite support. Thus, $u$ 
	is a finite linear combination of characteristic functions on one point sets. Therefore, $u\cdot T\in C^{*}_{\lambda}(H)$. 
	It follows from 
	Proposition \ref{p10} that
	$ UCB(\widehat{H}) =  C^{*}_{\lambda}(H)$.
	
	{\rm(ii)}$ \Rightarrow ${\rm(iii)}.  
	If  $ UCB(\widehat{H})=C^{*}_{\lambda}(H)$, 
	then 
	$ UCB(\widehat{H}) \subseteq W(\widehat{H}) $
	by Proposition \ref{p8}.
	Let $m, n$ be topologically invariant means on $VN(H)$. Then $m=n$ when restricted to $W(\widehat{H})$ by Proposition \ref{pw}(ii). Since $UCB(\widehat{H}) \subseteq W(\widehat{H})$, we conclude that  $R(m)=R(n)$, and hence $m=n$ by Lemma \ref{l1}. 
	Again Lemma \ref{l1}, implies that there is a unique topological invariant mean on $UCB(\widehat{H})$.
	
	{\rm(iii)}$ \Rightarrow ${\rm(i)}. This follows from Lemma \ref{l1} and \cite[ Theorem $1.7$]{kumar2}.   
\end{proof}
It is shown in \cite[ Theorem $3.15$]{murg2} that $ B_{\lambda}(H) $ is a Banach algebra under pointwise multiplication.
As shown in Proposition \ref{pw}, $C^{*}_{\lambda}(H)  $ is topologically introverted. In particular,   $  C^{*}_{\lambda}(H)^*=B_{\lambda}(H) $ is a Banach algebra with the Arens Product. It is shown  in \cite[Proposition 5.3]{lau1} that the Arens product on $B_{\lambda}(G) $ is precisely the pointwise
product on it. Following we show that the same is also true for an ultraspherical hypergroup $H$.

\begin{proposition}
	Let $ H $ be an ultraspherical hypergroup. Then the Arens product and the pointwise multiplication on
	$ B_{\lambda}(H) $
	coincide.
\end{proposition}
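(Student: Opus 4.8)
The plan is to show that for $\phi,\psi\in B_\lambda(H)$, the Arens product $\phi\odot\psi$ agrees with the pointwise product $\phi\psi$. Since $B_\lambda(H)=C^*_\lambda(H)^*$ and the Arens product is inherited from $VN(H)^*$ via the introversion of $C^*_\lambda(H)$ established in Proposition \ref{pw}(i), the natural strategy is to test both products against the dense subset $\lambda(L^1(H))$ of $C^*_\lambda(H)$ and verify they give the same value. By density and continuity, agreement on $\lambda(L^1(H))$ forces agreement on all of $C^*_\lambda(H)$, which identifies the two functionals.

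First I would unwind the definition of the Arens product. For $f\in L^1(H)$, the key computation is to determine $\psi\cdot\lambda(f)\in C^*_\lambda(H)$, and here Remark \ref{remb} does the essential work: it gives $\psi\cdot\lambda(f)=\lambda(\psi f)\in\lambda(L^1(H))$. With this in hand, I would compute
\begin{align*}
\langle \phi\odot\psi,\lambda(f)\rangle
&=\langle \phi,\psi\cdot\lambda(f)\rangle
=\langle \phi,\lambda(\psi f)\rangle
=\int_H \phi(\dot{x})\,\psi(\dot{x})\,f(\dot{x})\,d\dot{x}.
\end{align*}
On the other hand, since $\phi\psi$ is the pointwise product and $B_\lambda(H)$ is a Banach algebra under pointwise multiplication (by \cite[Theorem 3.15]{murg2}), I would write
$$
\langle \phi\psi,\lambda(f)\rangle=\int_H (\phi\psi)(\dot{x})\,f(\dot{x})\,d\dot{x}=\int_H \phi(\dot{x})\,\psi(\dot{x})\,f(\dot{x})\,d\dot{x}.
$$
These two integrals coincide, so $\langle \phi\odot\psi,\lambda(f)\rangle=\langle \phi\psi,\lambda(f)\rangle$ for every $f\in L^1(H)$.

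To finish, I would invoke the density of $\lambda(L^1(H))$ in $C^*_\lambda(H)$ together with the norm-continuity of the functionals $\phi\odot\psi$ and $\phi\psi$ on $C^*_\lambda(H)$: two bounded linear functionals agreeing on a dense subspace are equal. Hence $\phi\odot\psi=\phi\psi$ as elements of $B_\lambda(H)$, as required.

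The main obstacle, and the step that carries the real content, is the identification $\psi\cdot\lambda(f)=\lambda(\psi f)$; everything else is bookkeeping with the three-stage definition of the Arens product. Fortunately this identity is exactly what Remark \ref{remb} supplies, so the proof reduces to assembling that fact with the elementary integral computation above. The only subtlety to watch is that the module action $\psi\cdot\lambda(f)$ appearing in the Arens product is the one induced on $C^*_\lambda(H)$ as a $B_\lambda(H)$-bimodule, and one should confirm it coincides with the action used in Remark \ref{remb}; this is immediate from the definitions of the module actions on $VN(H)$ and its predual.
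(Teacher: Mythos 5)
Your proof is correct, but it takes a genuinely different route from the paper. You work on the predual side: you unwind the Arens product directly on the norm-dense subspace $\lambda(L^1(H))$ of $C^*_\lambda(H)$, where the identity $\psi\cdot\lambda(f)=\lambda(\psi f)$ (the Arens-type action agreeing with the module action of Remark \ref{remb}, a two-line check from the definitions, as you note) reduces everything to the integral $\int_H \phi\,\psi\, f\, d\dot{x}$, and then you conclude by equality of bounded functionals on a dense subspace. The paper instead argues on the dual side: it shows that the pointwise product is separately $w^*$-continuous (via essentially the same pairing computation you use), shows that the Arens product is separately $w^*$-continuous --- one variable being automatic, the other requiring $C^*_\lambda(H)\subseteq W(\widehat{H})$ (Proposition \ref{p8}) together with \cite[Proposition 3.11]{dl} --- and then transfers the agreement of the two products from the $w^*$-dense subalgebra $A(H)$ to all of $B_\lambda(H)$. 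Your argument is shorter and more elementary: it needs only the introversion of $C^*_\lambda(H)$ (Proposition \ref{pw}(i)) to make $\phi\odot\psi$ meaningful, and it bypasses $W(\widehat{H})$, the Dales--Lau citation, and all $w^*$-continuity considerations. What the paper's approach buys in exchange is the separate $w^*$-continuity of both multiplications on $B_\lambda(H)$, a fact of independent interest, and a template that would still work for introverted subspaces whose elements are not concretely realized as $\lambda$ of integrable functions, where your direct evaluation would have no foothold.
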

\begin{proof}
	Let $\phi, \psi\in B_{\lambda}(H)$. Then for each $f\in L^1(H)$, we have $$\langle\phi\psi, \lambda(f)\rangle=\langle\phi, \lambda(\psi f)\rangle=\langle\psi, \lambda(\phi f)\rangle.$$ This shows that the  pointwise multiplication on $B_{\lambda}(H)$ is separately continuous in the $w^*$-topology. 
	Furthermore,  for each $ \psi \in  B_{\lambda}(H)  $,  the map $\phi\mapsto \phi \odot \psi  $ from $B_{\lambda}(H)  $  into $B_{\lambda}(H)$   is $ w^*$-$w^*$-continuous. Since $C^{*}_{\lambda}(H)\subseteq W(\widehat{H})$, it follows from \cite[Proposition 3.11]{dl}
	that the map $\phi\mapsto \psi \odot \phi $ is   continuous in the $w^*$-topology. 
	Therefore, the Arens product also is  separately continuous in the weak$^* $-topology.
	Since the Arens product and the pointwise multiplication on $ A(H) $ coincide  and $A(H)$ is
	$w^{*}$-dense in 
	$ B_{\lambda}(H)$, we conclude that 
	$\phi \odot \psi = \phi  \psi$ for all  $\phi, \psi \in  B_{\lambda}(H)$.
\end{proof}



\begin{thebibliography}{HD82}




\normalsize
\baselineskip=17pt

	
	\bibitem{alagh}
	{ M. Alaghmandan,}
	{Remarks on weak amenability of hypergroups},
	arXiv:1808.03805v1.
	
\bibitem{am}	M. Amini and A. Medghalchi, Fourier algebras on tensor hypergroups. In Banach algebras and their applications, volume 363 of Contemp. Math., pages 1-14. Amer. Math. Soc., Providence, RI, 2004.
	
	
	\bibitem{dl} H. G. Dales and A. T.-M. Lau, {The second duals of Beurling algebras,}  Mem. Amer. Math. Soc.   { 177} (2005), 1-191.
	
\bibitem{eym}
{P. Eymard,} {L'alg\'{e}bre de Fourier d'un groupe localement
	compact}, { Bull. Soc. Math. France,} {92} (1964),
181-236.
	
	
	
	\bibitem{hnr} Z. Hu, M. Neufang, Z.-J. Ruan, { Multipliers on a new class of Banach algebras, locally compact quantum groups,
		and topological centres},  Proc. Lond. Math. Soc.  {100}  (2010),  429-458. 
	
	\bibitem{hnr2} Z. Hu, M. Neufang, Z.-J. Ruan, {Completely bounded multipliers over locally compact
		quantum groups},  Proc. Lond. Math. Soc.   { 103} (2011), 1-39.
	
	
	\bibitem{mi} M. Ilie, Dual Banach algebras associated to the coset space of a locally compact group, J. Math. Anal. Appl. 467 (2018), 550-569.
	
	
	\bibitem{lar} { R. Larsen,}
	{An introduction to the theory of multipliers}, Springer-Verlag, New
	York-Heidelberg,  1971.
	
	\bibitem{lau1} { A. T.-M. Lau,}
	{Uniformly continuous functionals on the Fourier algebra of any locally compact group}, Trans.  Amer. Math. Soc.   { 251} (1979), 39-59.
	
	
	
	\bibitem{lau1987} { A. T.-M. Lau,}
	{Uniformly continuous functional on Banach algebras},  Colloq.  Math.  { LI} (1987), 195-205.
	
	\bibitem{los1}
	V. Losert, { Properties of the Fourier algebra that are equivalent to amenability}, { Proc. Amer. Math. Soc.}
	{92} (1984), 347-354.
	
	
	\bibitem{meg} { R.E.  Megginson,} {An  introduction to Banach space theory}, Springer-Verlag, New York, 1998.
	
	\bibitem{miao} T. Miao, {Predual of the multiplier algebra of $A_p(G)$ and amenability}, Canad. J. Math.  {56} (2004), 344-355. 
	
	\bibitem{murg1}
	{ V. Muruganandam,}
 {Fourier algebra of a hypergroup. $I$,}
	J. Austral. Math. Soc.  {82}
	(2007), 59-83.
	
	\bibitem{murg2}
	{ V. Muruganandam,}
	{Fourier algebra of a hypergroup. $II$.
		Spherical hypergroups},
	Math. Nachr.    {11}
	(2008), 1590-1603.
	
	
	
	\bibitem{rifel} M. A. Rieffel, {Multipliers and tensor products on $L_p$-spaces of locally compact groups},
	Studia Math.  {33} (1969), 71-82.
	
	
	\bibitem{kumar1}{ N. Shravan Kumar},
	{Invariant mean on a class of von Neumann algebras related to ultraspherical hypergroups,}
	Studia. Math.  {225} (2014), 235-247.
	
	\bibitem{kumar2} { N. Shravan Kumar},
	{Invariant mean on a class of von Neumann algebras related to ultraspherical hypergroups $II$,}
	Canad. Math. Bull.  {60} (2017), 402-410.


\end{thebibliography}
\end{document}